\newtheorem{theorem}{Theorem}
\newtheorem{lemma}{Lemma}
\newtheorem{proposition}{Proposition}
\newtheorem{definition}{Definition}
\newcommand{\Le}{\mathfrak{Le}}
\newcommand{\seq}[1]{\left\{ #1 \right\}_{k=0}^{\infty}}
\newcommand{\set}[1]{\left\{#1\right\}}
\newcommand{\R}{\mathbb R}
\newcommand{\C}{\mathbb C}
\newcommand{\N}{\mathbb{N}}
\newcommand{\ds}[1]{\displaystyle{ #1}}
\newcommand{\vp}{\varphi}
\newcommand{\LP}{\mathcal{L}-\mathcal{P}}
\title{On Legendre Multiplier Sequences}
\author{Kelly Blakeman${}^{\dag}$}
\address{${}^{\dag}$-Department of Mathematics\newline \indent
Loyola Marymount University \newline \indent 1 LMU Drive, Suite 2700, University Hall \newline \indent Los Angeles, CA 90045}
\email{kblakema@lion.lmu.edu}
\author{Emily Davis${}^{\ddag}$} 
\address{${}^{\ddag}$-Department of Mathematics\newline \indent
Brigham Young University \newline \indent 275 TMCB \newline \indent Provo, UT 84602}
\email{emilydavis89@byu.edu}
\author{Tam\'as Forg\'acs${}^{\S}$}
\address{${}^{\S}$- corresponding author \newline \indent Department of Mathematics\newline \indent
California State University, Fresno \newline \indent 5245 North Backer Ave., M/S PB 108 \newline \indent Fresno, CA 93740-8001}
\email{tforgacs@csufresno.edu}
\author{Katherine Urabe${}^{\star}$}
\address{${}^{\star}$-Department of Mathematics\newline \indent
California State University, Fresno \newline \indent 5245 North Backer Ave., M/S PB 108 \newline \indent Fresno, CA 93740-8001}
\email{kturabe@mail.fresnostate.edu}
\date{}
\begin{document}
\maketitle

\begin{abstract}
In this paper we give a complete characterization of linear, quadratic, and geometric Legendre multiplier sequences. We also prove that all Legendre multiplier sequences must be Hermite multiplier sequences, and describe the relationship between the Legendre and generalized Laguerre multiplier sequences. We conclude with a list of open questions for further research.  {\bf 26C10, 30C15}

\smallskip

\noindent {\it Keywords:} Multiplier sequence, Legendre polynomials, reality preserving operators.
\end{abstract}

\section{Introduction}

A set of polynomials $Q=\{ q_k \}_{k=0}^{\infty}$ is called {\it simple} if $\deg q_k=k$ for all $k \in \N \cup \set{0}$.  Given a simple set of polynomials $Q=\{q_k \}_{k=0}^{\infty}$ and a sequence of real numbers $\{\gamma_k\}_{k=0}^{\infty}$, one can define a linear operator $T$ on $\mathbb{R}[x]$ by declaring $T[q_k(x)]=\gamma_k q_k(x)$ for all $k \in \N \cup \set{0}$. We call $\{\gamma_k\}_{k=0}^{\infty}$ a \textit{Q-multiplier sequence} if $T[p]$ has only real zeros whenever $p$ has only real zeros.  In the case when $Q$ is the standard basis, we follow the existing literature by using the terminology `multiplier sequence' or `classical multiplier sequence' without a reference to $Q$.
\newline \indent  Whether a sequence $\seq{\gamma_k}$ is a $Q$-multiplier sequence depends crucially on the choice of the set $Q$. In \cite{p} Piotrowski has shown that every $Q$-multiplier sequence is a classical multiplier sequence if $Q$ is any simple set of polynomials (see Theorem \ref{qmscms} in Section \ref{background}). There has been recent progress in giving conditions under which multiplier sequences for a simple set $Q$ are also multiplier sequences for another simple set $\widetilde{Q}$, with $\widetilde{Q}$  not necessarily the standard basis (see \cite{tjb}), although the theory in this much generality is still incomplete.
In this paper, we focus our attention on the simple set of Legendre polynomials and their corresponding multiplier sequences.  
\begin{definition} \label{legendredef}The Legendre polynomials $\Le_n(x)$ are defined by the following generating function
\[
\frac{1}{\sqrt{1-2xt+t^2}}=\sum_{k=0}^{\infty} \Le_k(x)t^k.
\]
\end{definition}

The choice of the Legendre polynomials is motivated by the fact that the Legendre and Hermite polynomials are both defined using generating functions of the form
\[
G(2xt-t^2) = \displaystyle\sum_{k=0}^{\infty} g_k(x)t^k.
\]
As a consequence, the Legendre and Hermite polynomials satisfy very similar differential equations (see \cite{rainville}, p. 132), suggesting that Legendre and Hermite multiplier sequences might be closely related. Since the Hermite multiplier sequences have been completely characterized by Piotrowski in \cite{p}, we had hoped to achieve a similar result for the Legendre multiplier sequences.\\
\indent The rest of the paper is organized as follows. Section \ref{background} gives a brief review of relevant results in the literature. In Section \ref{prop} we investigate the properties of Legendre multiplier sequences and show that the set of Legendre multiplier sequences is a subset of the Hermite multiplier sequences. Section \ref{polyint} contains the classification of all linear, quadratic, and geometric Legendre multiplier sequences. Section \ref{concl} concludes with some open questions. 


\section{Background} \label{background}

In the late 1800s Laguerre and Jensen were already investigating the existence of classical multiplier sequences, but it was not until 1914 that a complete characterization of all such sequences would emerge in a paper by P\'olya and Schur (\cite{ps}).
\begin{definition} A real entire function $\ds{\vp(x)=\sum_{k=0}^{\infty} \frac{\gamma_k}{k!}x^k}$ is said to belong to the Laguerre-P\'olya class $\LP$ if and only if it is the locally uniform limit in $\C$ of real polynomials having only real zeros\footnote{The Laguerre-P\'olya class is usually defined as a set of functions with a particular Weierstrass factorization. For the sake of exposition we opted for this simpler, but equivalent definition}. If, in addition, $\gamma_k \geq 0$ for $k=0,1,2,\ldots$, we will write $\vp \in \LP^+$.
\end{definition}
\begin{theorem}\label{PolyaSchur}
\emph{(P\'olya-Schur \cite{ps})} \label{tc} Let $\{\gamma_k\}_{k=0}^{\infty}$ be a sequence of non-negative real numbers.  The following are equivalent:
\begin{enumerate}
\item{$\{\gamma_k\}_{k=0}^{\infty}$ is a multiplier sequence}.
\item For each $n$, the polynomial $\displaystyle T[(1+x)^n]:= \sum_{k=0}^{n}\binom{n}{k} \gamma_k x^k \in \LP^+$.
\item $\ds{ T[e^x]:= \sum_{k=0}^{\infty} \frac{\gamma_k}{k!}x^k} \in \LP^+$.
\end{enumerate}
\end{theorem}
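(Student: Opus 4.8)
\medskip
\noindent\textbf{A proof strategy.} I plan to prove $(1)\Leftrightarrow(2)$ and $(2)\Leftrightarrow(3)$, which together give the equivalence of all three statements. The implication $(1)\Rightarrow(2)$ is immediate: $(1+x)^n$ has only real zeros, hence so does $T[(1+x)^n]=\sum_{k=0}^{n}\binom{n}{k}\gamma_k x^k$ by hypothesis, and since its coefficients are non-negative it cannot vanish on $(0,\infty)$, so all of its zeros lie in $(-\infty,0]$ and it belongs to $\LP^+$.

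For $(2)\Rightarrow(1)$ --- the substantive algebraic step --- I would use the Schur--Szeg\H{o} composition theorem (a consequence of the Grace--Walsh--Szeg\H{o} coincidence theorem): if $A(x)=\sum_{k=0}^{n}\binom{n}{k}a_k x^k$ has only real zeros, all lying in $(-\infty,0]$, and $B(x)=\sum_{k=0}^{n}\binom{n}{k}b_k x^k$ has only real zeros, then $\sum_{k=0}^{n}\binom{n}{k}a_k b_k x^k$ has only real zeros. Given $p(x)=\sum_{j=0}^{n}b_j x^j$ of degree $n$ with only real zeros, write $p(x)=\sum_{j=0}^{n}\binom{n}{j}\widetilde b_j x^j$ with $\widetilde b_j=b_j/\binom{n}{j}$; taking $A:=T[(1+x)^n]=\sum_{k=0}^{n}\binom{n}{k}\gamma_k x^k$, which lies in $\LP^+$ by hypothesis $(2)$, one recognizes $T[p]=\sum_{j=0}^{n}\gamma_j b_j x^j$ as exactly the Schur--Szeg\H{o} composition of $A$ and $p$, whence $T[p]$ has only real zeros. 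This gives $(1)$.

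For $(2)\Rightarrow(3)$ I would pass to the limit through dilations. Replacing $x$ by $x/n$ carries $T[(1+x)^n]$ to
\[
h_n(x):=\sum_{k=0}^{n}\frac{n!}{n^k(n-k)!}\cdot\frac{\gamma_k}{k!}\,x^k ,
\]
still a member of $\LP^+$, and since $\tfrac{n!}{n^k(n-k)!}\to 1$ we have $h_n\to\varphi:=\sum_{k\ge 0}\tfrac{\gamma_k}{k!}x^k$ coefficientwise. The point requiring care is to upgrade this to locally uniform convergence on $\C$; granting it, $\varphi$ is a locally uniform limit of real polynomials with only real zeros, hence lies in $\LP$, and the non-negativity of its Taylor coefficients places it in $\LP^+$. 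For the needed uniform bound I would feed Newton's inequalities for $h_n$ into the limit $n\to\infty$: they yield $(\gamma_k/k!)^2\ge\tfrac{k+1}{k}(\gamma_{k-1}/(k-1)!)(\gamma_{k+1}/(k+1)!)$, which telescopes to $\gamma_k/k!\le\gamma_0(\gamma_1/\gamma_0)^k/k!$, so $\varphi$ is entire and dominates $|h_n|$ on each compact set; dominated convergence then yields the locally uniform convergence. (The degenerate cases, where some $\gamma_k$ vanishes, make $\varphi$ a monomial times a polynomial in $\LP^+$ and are easy to treat separately.) Conversely, for $(3)\Rightarrow(2)$ I would approximate $\varphi\in\LP^+$ locally uniformly by polynomials $c\,x^m\prod_{i}(1+x/\rho_i)$ with $\rho_i>0$ (partial products of the Hadamard factorization, the exponential factor absorbed as $\lim(1+\sigma x/\nu)^\nu$); the associated multiplier sequences converge entrywise, so by Hurwitz's theorem it suffices to prove $(2)$ when $\varphi$ is such a polynomial, i.e.\ that $T[(1+x)^n]=\sum_{k=0}^n\binom{n}{k}\gamma_k x^k\in\LP^+$. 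This can be done by induction on the number of linear factors of $\varphi$ --- peeling off a factor $1+x/\rho$ gives the recursion $T[(1+x)^n]=\widetilde T[(1+x)^n]+\tfrac{nx}{\rho}\,\widetilde T[(1+x)^{n-1}]$, whose real-rootedness follows from a companion interlacing statement --- or by appeal to the classical composition theorems of Malo, Schur and Laguerre.

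I expect the composition-theorem input to be the principal obstacle: invoking (or reproving) the Grace--Walsh--Szeg\H{o} coincidence theorem, and deducing from it the base-case fact that $T[(1+x)^n]\in\LP^+$ when $\varphi\in\LP^+$ is a polynomial, is the only genuinely non-elementary ingredient, and one must be careful that $T[p]$ is identified with the Schur--Szeg\H{o} composition under precisely the normalization built into statement $(2)$ --- which is what the binomial weights there are for. The second, more technical, point is the upgrade from coefficientwise to locally uniform convergence in $(2)\Rightarrow(3)$: the positivity of the $\gamma_k$ is used there through Newton's inequalities, and the example $\gamma_k=k!$ --- for which $\sum_{k}\tfrac{\gamma_k}{k!}x^k=\tfrac{1}{1-x}\notin\LP$ --- shows that some a priori control of the coefficients cannot be dispensed with.
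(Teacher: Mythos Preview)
The paper does not prove this theorem at all: it is stated in Section~\ref{background} as classical background, with the proof deferred entirely to the original reference \cite{ps}. So there is nothing in the paper to compare your argument against.

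Your outline is a faithful reconstruction of the standard proof and is essentially correct. Two small points are worth flagging. First, the recursion you write in $(3)\Rightarrow(2)$,
\[
T[(1+x)^n]=\widetilde T[(1+x)^n]+\tfrac{nx}{\rho}\,\widetilde T[(1+x)^{n-1}],
\]
is not quite right: peeling off a factor $1+k/\rho$ gives instead
\[
T[(1+x)^n]=\Bigl(I+\tfrac{x}{\rho}D\Bigr)\,\widetilde T[(1+x)^n],
\]
since $\sum_k k\binom{n}{k}\tilde\gamma_k x^k = x\,\frac{d}{dx}\widetilde T[(1+x)^n]$; the shifted-index version you wrote would carry $\tilde\gamma_{j+1}$ rather than $\tilde\gamma_j$. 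The corrected identity still does the job, because $I+\tfrac{x}{\rho}D$ (with $\rho>0$) preserves the class of polynomials with only real non-positive zeros --- this is Laguerre's theorem, and is the elementary base case you allude to. Second, in $(2)\Rightarrow(3)$ your telescoping bound $\gamma_k/k!\le\gamma_0(\gamma_1/\gamma_0)^k/k!$ is correct, but it relies on the \emph{sharp} form of Newton's inequalities $u_k^2\ge\tfrac{k+1}{k}u_{k-1}u_{k+1}$ (obtained in the limit $n\to\infty$ from the degree-$n$ version), not merely log-concavity $u_k^2\ge u_{k-1}u_{k+1}$; the weaker inequality would only give a geometric bound on $u_k$ and hence a finite radius of convergence. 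You clearly have this in mind, but it is the step where a careless reader could lose the thread.
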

The following theorem of Piotrowski relates multiplier sequences for any simple set $Q$ to classical multiplier sequences:
\begin{theorem}[Piotrowski, 2007] \label{qmscms} Let $Q=\seq{q_k(x)}$ be any simple set of polynomials. If the sequence $\seq{\gamma_k}$ is a $Q$-multiplier sequence, then it is also a classical multiplier sequence.  
\end{theorem}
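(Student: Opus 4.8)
The plan is to exhibit the classical diagonal operator $\widetilde{T}$, defined by $\widetilde{T}[x^k]=\gamma_k x^k$, as a limit of linear operators each of which inherits the reality‑preservation property of the $Q$‑operator $T$, and then to finish with Hurwitz's theorem. The key device is conjugation by the dilations $\sigma_\lambda\colon\R[x]\to\R[x]$, $(\sigma_\lambda f)(x)=f(\lambda x)$: for $\lambda\neq 0$ each $\sigma_\lambda$ is invertible, with inverse $\sigma_{1/\lambda}$, and both maps send real‑rooted polynomials to real‑rooted polynomials, since they merely rescale the zero set.

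First I would set $S_\lambda:=\sigma_\lambda\circ T\circ\sigma_{1/\lambda}$ for $\lambda\neq 0$. Being a composition of three maps that preserve reality of zeros, $S_\lambda$ again maps real‑rooted polynomials to real‑rooted polynomials; this is the only step that uses the hypothesis that $\seq{\gamma_k}$ is a $Q$‑multiplier sequence. Then I would compute $S_\lambda$ on the standard basis: writing $c_k$ for the leading coefficient of $q_k$ and expanding $x^k=\sum_{j=0}^k d_{k,j}q_j(x)$ (so that $d_{k,k}=1/c_k$), one obtains $T[x^k]=\sum_{j=0}^k d_{k,j}\gamma_j q_j(x)$ and hence $S_\lambda[x^k]=\lambda^{-k}\sum_{j=0}^k d_{k,j}\gamma_j q_j(\lambda x)$. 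Each term $\lambda^{-k}q_j(\lambda x)$ is a polynomial in $x$ of degree $j\le k$ all of whose coefficients are $O(\lambda^{j-k})$ as $\lambda\to\infty$, so only the $j=k$ term survives in the limit, contributing $d_{k,k}\gamma_k c_k x^k=\gamma_k x^k$. Thus $S_\lambda[x^k]\to\gamma_k x^k$ coefficientwise, and by linearity $S_\lambda[p]\to\widetilde{T}[p]$ coefficientwise — hence locally uniformly on $\C$ — for every fixed $p\in\R[x]$.

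Finally I would fix a real‑rooted polynomial $p$. If $\widetilde{T}[p]\equiv 0$ there is nothing to prove; otherwise, if $\widetilde{T}[p]$ had a zero $z_0$ with $\operatorname{Im}z_0\neq 0$, then by Hurwitz's theorem the convergence $S_\lambda[p]\to\widetilde{T}[p]$ would force $S_\lambda[p]$ to have a non‑real zero near $z_0$ for all sufficiently large $\lambda$, contradicting the reality‑preservation of $S_\lambda$. Hence $\widetilde{T}[p]$ has only real zeros, which is exactly the statement that $\seq{\gamma_k}$ is a classical multiplier sequence. I expect the main obstacle to be making this last passage to the limit airtight: since some $\gamma_k$ may vanish, $\widetilde{T}[p]$ can have strictly smaller degree than $S_\lambda[p]$, so the argument should be localized around a prospective non‑real zero (as above) rather than carried out by tracking the whole zero set. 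Everything else is a routine verification.
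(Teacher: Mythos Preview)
Your argument is correct. The conjugation by dilations $S_\lambda=\sigma_\lambda\circ T\circ\sigma_{1/\lambda}$ is exactly the right device: it preserves reality of zeros for every $\lambda\neq 0$, your computation that $S_\lambda[x^k]\to\gamma_k x^k$ coefficientwise is accurate (the bound $\deg S_\lambda[p]\le\deg p$ ensures that coefficientwise convergence upgrades to locally uniform convergence), and the localized Hurwitz argument at a putative non-real zero correctly avoids the degree-drop issue you anticipated.

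As for comparison: the paper does not actually supply a proof of this theorem. It is quoted in Section~\ref{background} as a background result attributed to Piotrowski's thesis \cite{p} and is used thereafter as a black box. So there is no ``paper's own proof'' to compare against; your scaling-and-limit argument is in fact the standard way this result is established, and nothing further is needed.
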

We shall make use of Theorem \ref{qmscms} repeatedly as we look for properties every Legendre multiplier sequence has to satisfy. \\
As we mentioned in the introduction, if $\seq{q_k(x)}$ is a simple set of real polynomials and $\seq{\gamma_k}$ is any sequence of real numbers, then the operator defined by $T[q_k(x)]=\gamma_kq_k(x)$ for all $k \in \N \cup \set{0}$ is a linear operator on the polynomial ring $\R[x]$. The following theorem of Piotrowski guarantees that every linear operator on $\R[x]$ has a unique differential operator representation.

\begin{theorem}\label{diffoprep}
Let $T: \R[x] \to \R[x]$ be a linear operator. Then there exists a unique set of complex polynomials $\{p_k(x)  \}_{k=0}^{\infty}$ such that 
\begin{equation*}\label{linoprepeq}
T[f(x)]=\sum_{k=0}^{\infty} p_k(x) f^{(k)}(x)
\end{equation*}
for all $f \in \R[x]$.
\end{theorem}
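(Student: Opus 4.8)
The plan is to prove the existence and uniqueness of the differential operator representation by an explicit recursive construction on the degree of the monomial on which $T$ acts. First I would observe that since $T$ is determined by its values on the standard basis $\{x^n\}_{n=0}^\infty$, it suffices to find complex polynomials $p_k(x)$ so that $T[x^n] = \sum_{k=0}^{n} p_k(x)\, (x^n)^{(k)} = \sum_{k=0}^n p_k(x)\,\frac{n!}{(n-k)!}\,x^{n-k}$ holds for every $n$, the sum being finite because $(x^n)^{(k)} = 0$ for $k > n$. This reduces the theorem to a triangular linear system: the constraint for $n=0$ forces $p_0(x) = T[1]$; the constraint for $n=1$ then determines $p_1(x)$ in terms of $T[x]$ and $p_0(x)$; and in general the constraint for degree $n$ determines $p_n(x)$ uniquely once $p_0,\dots,p_{n-1}$ are known, since $p_n(x)$ appears in that equation with the nonzero scalar coefficient $n!$ (it is the $k=n$ term, multiplied by $(x^n)^{(n)} = n!$).

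Concretely, I would define $p_n(x)$ recursively by
\[
p_n(x) = \frac{1}{n!}\left( T[x^n] - \sum_{k=0}^{n-1} p_k(x)\,\frac{n!}{(n-k)!}\,x^{n-k} \right),
\]
and then verify by induction on $n$ that with this choice $T[x^n] = \sum_{k=0}^{n} p_k(x)\,(x^n)^{(k)}$; the base case $n=0$ is immediate, and the inductive step is just a rearrangement of the defining formula. Each $p_n$ is a polynomial because $T[x^n]$ is a polynomial and the subtracted terms are polynomials, and it has complex (indeed real, if $T$ maps into $\R[x]$, though the statement only claims complex) coefficients. Once the formula $T[x^n] = \sum_{k=0}^{n} p_k(x)\,(x^n)^{(k)}$ holds for all $n$, linearity of both sides in $f$ extends it to all $f \in \R[x]$: writing $f = \sum_n a_n x^n$ (a finite sum), we get $T[f] = \sum_n a_n T[x^n] = \sum_n a_n \sum_k p_k(x)(x^n)^{(k)} = \sum_k p_k(x) \sum_n a_n (x^n)^{(k)} = \sum_k p_k(x) f^{(k)}(x)$, where all sums are finite.

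For uniqueness, suppose $\{p_k\}$ and $\{\widetilde p_k\}$ both represent $T$, and let $r_k = p_k - \widetilde p_k$, so $\sum_{k=0}^\infty r_k(x) f^{(k)}(x) = 0$ for all $f \in \R[x]$. Applying this identity to $f = 1, x, x^2, \dots$ in turn and reading off the triangular system exactly as above forces $r_0 = 0$, then $r_1 = 0$, and inductively $r_n = 0$ for all $n$: the identity for $f = x^n$ reads $\sum_{k=0}^n r_k(x)\frac{n!}{(n-k)!}x^{n-k} = 0$, and once $r_0 = \dots = r_{n-1} = 0$ this collapses to $n!\, r_n(x) = 0$.

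I do not expect a genuine obstacle here; the result is essentially a bookkeeping argument. The one point requiring a little care is confirming that the representation, once established on the monomial basis, genuinely extends to all polynomials with the \emph{same} fixed family $\{p_k\}$ — i.e. that the interchange of the two finite sums is legitimate and that no convergence issue arises — but since every polynomial is a finite linear combination of monomials and, for fixed $f$ of degree $d$, only $p_0,\dots,p_d$ contribute, this is routine. It is worth remarking in passing that although the $p_k$ are \emph{a priori} allowed to be complex, if $T$ maps $\R[x]$ into $\R[x]$ then the recursion produces real $p_k$; the complex generality in the statement is what is needed when $T$ is later applied after a change of basis.
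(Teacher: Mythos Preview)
The paper does not actually prove this theorem; it is stated there as a result of Piotrowski (\cite{p}) and immediately followed by an example, with no argument given. Your proof is correct and is precisely the standard triangular recursion one uses to establish this fact: the key observations are that $(x^n)^{(n)}=n!\neq 0$ makes the system for $p_0,\dots,p_n$ lower-triangular with nonzero diagonal, and that for any fixed polynomial $f$ only finitely many terms of the series contribute, so all sums are finite and no convergence issues arise. Your remark that the recursion in fact yields \emph{real} $p_k$ when $T:\R[x]\to\R[x]$ is also correct and worth keeping.
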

\smallskip
\noindent{\bf Example 1.} Consider the sequence $\Gamma=\seq{k}$. Since $\ds{x (x^k)' =kx^k}$ for $k=0,1,2,\ldots$, we see that this sequence has the differential operator representation $\Gamma=xD$, where $D$ denotes differentiation with respect to $x$. Rolle's theorem implies that differentiation preserves the reality of zeros. Since multiplication by $x$ only introduces another zero at 0, it follows that $xD$ is a reality preserving operator. Thus $\seq{k}$ is a classical multiplier sequence. 

\smallskip
Should we choose to study the reality preserving properties of a sequence through its differential operator representation, we need to be able to decide whether a given differential operator is reality preserving. We have a deep result of Borcea and Br\"and\'en from 2009 to aid us in this endeavor.
\begin{theorem}[Borcea-Br\"{a}nd\'{e}n \cite{BB}]\label{3exw}
A linear operator $T: \mathbb{R}[x] \rightarrow \mathbb{R}[x]$ preserves reality of zeros if and only if either
\begin{enumerate}
\item{T has range of dimension at most two and is of the form $T[f] = \alpha(f)P+ \beta(f)Q$, where $\alpha$ and $\beta$ are linear functionals on  $\mathbb{R}[x]$, and $P$ and $Q$ are polynomials with only real interlacing zeros, or}
\item{$T[e^{-xw}]=\displaystyle\sum_{k=0}^{\infty} \frac{(-w)^nT[x^n]}{n!} \in \overline{A}$, or}
\item{$T[e^{xw}]=\displaystyle\sum_{k=0}^{\infty} \frac{w^nT[x^n]}{n!} \in \overline{A}$}
\end{enumerate}
where $\overline{A}$ denotes the set of entire functions in two variables which are uniform limits on compact subsets of polynomials in the set
\begin{center}
$A=\{f \in \mathbb{R}[x,w] | f(x,w) \neq 0$ whenever Im $(x)>0$ and Im $(w)>0\}$.
\end{center}
\end{theorem}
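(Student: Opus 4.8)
Since Theorem \ref{3exw} is quoted verbatim from \cite{BB}, what follows is only an outline of the strategy one would follow; the complete argument is the technical heart of that paper. The plan is to convert the reality-preserving property of the \emph{univariate} operator $T$ into a \emph{stability} statement for an associated function of two variables, and then to bring in the theory of multivariate stable polynomials --- principally the Grace--Walsh--Szeg\H{o} coincidence theorem together with a multivariate Hermite--Biehler theorem. Working with reality of all zeros, rather than with zeros confined to an open half-plane, is what makes this harder than the analogous ``stability-preserving'' theorem, because $\mathbb{R}$ is the boundary of a circular region rather than such a region itself; this is also why the two mutually exclusive alternatives (2) and (3) occur.

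First I would dispose of the case in which the range of $T$ has dimension at most two. Then $T[f] = \alpha(f)P + \beta(f)Q$ for linear functionals $\alpha,\beta$ and fixed polynomials $P,Q$, and a short argument with the classical Hermite--Biehler theorem shows that such a $T$ preserves reality of zeros exactly when $P$ and $Q$ can be chosen real-rooted and interlacing; this is alternative (1). Henceforth the range of $T$ is assumed to have dimension at least three.

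For the necessity of (2) or (3), I would fix $n$ and examine $T\bigl[(1 \pm xw/n)^n\bigr] = \sum_{k=0}^{n}\binom{n}{k}(\pm w/n)^k T[x^k]$. For every real $w$ the polynomial $(1 \pm xw/n)^n$ is real-rooted in $x$, so by hypothesis its $T$-image is real-rooted in $x$ as well. Polarizing in the $w$-variable turns $(1 \pm xw/n)^n$ into the symmetric multiaffine polynomial $\prod_{j=1}^{n}(1 \pm xw_j/n)$, one of whose two sign choices is stable with respect to a product of half-planes; applying $T$ (which acts on $x$) and then Grace--Walsh--Szeg\H{o} in the variables $w_j$, one concludes that $T\bigl[(1 \mp xw/n)^n\bigr]$ lies in $A$ for the correct sign. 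Since $T\bigl[(1 \pm xw/n)^n\bigr] \to T[e^{\pm xw}]$ locally uniformly as $n \to \infty$, Hurwitz's theorem places the appropriate limit in the closure $\overline{A}$, giving (2) or (3) according to the orientation. The converse is the same chain run backwards: if $T[e^{\pm xw}] \in \overline{A}$ and $p$ is real-rooted, polarize $p$, apply $T$, use Grace--Walsh--Szeg\H{o} once more to keep the zeros under control, and deduce that $T[p]$ is real-rooted.

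The step I expect to be the genuine obstacle is the equivalence between ``$T\bigl[(1 \pm xw/n)^n\bigr]$ is real-rooted in $x$ for every real $w$'' and honest bivariate stability of the symbol up to orientation: this requires the multivariate coincidence and Hermite--Biehler machinery in full strength, a verification that the orientation ambiguity is the only freedom available, and enough control on limiting zero configurations that no zero escapes to infinity as one passes to $n = \infty$. Since this is exactly what \cite{BB} establishes, we use Theorem \ref{3exw} as a black box in the sequel.
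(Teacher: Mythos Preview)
The paper does not prove Theorem~\ref{3exw} at all: it is stated in the Background section as a result of Borcea and Br\"and\'en and cited to \cite{BB} without any argument. You correctly identify this at the outset of your proposal, and your concluding sentence (``we use Theorem~\ref{3exw} as a black box in the sequel'') matches exactly how the paper treats it. So there is nothing to compare against; your outline of the Grace--Walsh--Szeg\H{o}/Hermite--Biehler strategy is a reasonable sketch of what \cite{BB} does, but it goes well beyond what the paper itself supplies, which is nothing.
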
  

We close this section by recalling two theorems regarding the reality of zeros of cubic and quartic polynomials for the reader's convenience. We make heavy use of these theorems in Section \ref{polyint}, when we investigate Legendre multiplier sequences interpolated by polynomials.
\begin{theorem}\label{cd}(\cite{quartic}, p. 154)
Let $f(x)=ax^3+bx^2+cx+d$. Consider the discriminant of $f(x)$,  \\ $\Delta=b^2c^2-4b^3d-4ac^3+18abcd-27a^2d^2$.   If
\begin{enumerate}
\item[(1)]{$\Delta > 0$, then $f$ has all real roots}
\item[(2)]{$\Delta < 0$, then $f$ has one real root and two complex conjugate roots}.
\end{enumerate}
\end{theorem}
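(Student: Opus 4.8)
The plan is to reduce the statement to the classical factorization of the discriminant over the roots of $f$ and then to read off the sign of $\Delta$ using the conjugate root theorem. Assume $a \neq 0$ (otherwise $f$ is not a cubic), and let $r_1, r_2, r_3 \in \C$ denote the roots of $f$. The first step is to record the identity
\[
\Delta = a^4\,(r_1-r_2)^2(r_2-r_3)^2(r_3-r_1)^2,
\]
which is the standard expression for the discriminant of a cubic. One proves it either by expanding the right-hand side and substituting Vieta's relations $r_1+r_2+r_3=-b/a$, $r_1r_2+r_1r_3+r_2r_3=c/a$, $r_1r_2r_3=-d/a$, and matching coefficients, or by invoking the resultant identity $\Delta = -a^{-1}\mathrm{Res}(f,f')$ together with the product formula for the resultant in terms of root differences.

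Next I would invoke the fact that, since $f$ has real coefficients and odd degree, its non-real roots occur in conjugate pairs, so exactly one of the following holds: (i) $r_1, r_2, r_3$ are all real; or (ii) $f$ has a real root $r$ and a conjugate pair $z = u+iv$, $\bar z = u-iv$ with $v \neq 0$. In case (i) each factor $(r_i-r_j)^2$ is a non-negative real number, so $\Delta \geq 0$, with equality precisely when two of the roots coincide. In case (ii), using $(r-z)(r-\bar z)=|r-z|^2$ and $z-\bar z = 2iv$ one computes
\[
(r-z)^2(r-\bar z)^2(z-\bar z)^2 = |r-z|^4(-4v^2) < 0,
\]
hence $\Delta = a^4\,|r-z|^4(-4v^2) < 0$.

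Combining the two cases finishes the proof. If $\Delta > 0$, then we cannot be in case (ii), so we are in case (i) and moreover no root difference vanishes; thus $f$ has three distinct real roots, which gives (1). If $\Delta < 0$, then we cannot be in case (i), so $f$ has one real root and a pair of complex conjugate roots, which gives (2).

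The only genuine computation here is the root-difference formula for $\Delta$; everything after it is a one-line sign analysis. I expect that identity — entirely routine but slightly tedious — to be the main obstacle and the place where essentially all the content sits, which is why it is natural to cite it (as the authors do) rather than reprove it in full. I would also note that the statement is deliberately one-directional: it says nothing about the borderline case $\Delta = 0$, so the repeated-root subcase need not be analyzed.
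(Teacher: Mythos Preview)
Your argument is correct and is the standard proof of this classical fact: the identity $\Delta = a^4\prod_{i<j}(r_i-r_j)^2$ together with the conjugate root theorem immediately yields the sign analysis you describe, and your case (ii) computation is accurate (note $r\neq z$ since $r$ is real and $z$ is not, so $|r-z|^4>0$).

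There is nothing to compare against in the paper itself: the authors do not prove this theorem but simply cite it from Irving's textbook (\cite{quartic}, p.~154) as a known tool to be used later in Section~\ref{polyint}. Your write-up supplies exactly the kind of self-contained justification one would give if asked to fill in that citation, and your closing remark about the $\Delta=0$ case being deliberately omitted is apt.
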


\begin{theorem}\label{qd}(\cite{quartic}, p. 167-170)
Let $g(x)=ax^4+bx^3+cx^2+dx+e$ be a quartic function, where $a, b, c, d, e \in \R$. Consider the discriminant of $g(x)$. 
 If
\begin{enumerate}
\item{$\Delta > 0$, then $g$ has either all real or all complex roots}
\\ \noindent Consider the depressed quartic $h(x)=z^4+qz^2+rz+s$.  If
\begin{enumerate}
\item{$q<0$ and $q^2-4s >0$, then the roots of the cubic resolvent are all positive and the roots of the given quartic are all real}
\item{$q<0$ and $q^2-4s >0$ do not both hold, then only one root of the cubic resolvent is positive and no roots are real}
\end{enumerate}
\item{$\Delta = 0$, then there may or may not be complex roots}
\item{$\Delta < 0$, then $g$ has two real roots and two complex conjugate roots}.
\end{enumerate}
\end{theorem}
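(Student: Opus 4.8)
\bigskip
\noindent\textbf{Proof proposal.} The plan is to reduce everything to a \emph{depressed} quartic and then to read off the configuration of the roots from the sign of $\Delta$, in the same spirit as Theorem \ref{cd}. Writing $g(x)=a\,h\!\left(x-\tfrac{b}{4a}\right)$ with $h(z)=z^{4}+qz^{2}+rz+s$ monic, the shift $x\mapsto x-\tfrac{b}{4a}$ kills the cubic term and changes neither which roots are real nor the discriminant, while dividing by $a$ multiplies the discriminant by $a^{6}>0$. Hence it is enough to prove the statement for $h$, and along the way we get that $\Delta(g)$ and $\Delta(h)$ have the same sign.

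First I would use the factorized discriminant $\Delta=\prod_{i<j}(z_{i}-z_{j})^{2}$, where $z_{1},z_{2},z_{3},z_{4}$ are the roots of $h$. If all four roots are real, $\Delta$ is a product of squares of real numbers, so $\Delta\ge 0$; if exactly two roots are real and the other two form a conjugate pair $\mu\pm i\nu$, then exactly one factor, $(2i\nu)^{2}$, is a negative real while all the others are positive reals (the mixed differences occur in conjugate pairs whose product is $|z_i-z_j|^{2}$), so $\Delta<0$; and if the four roots form two conjugate pairs, two of the factors are negative reals and the rest positive, so $\Delta>0$. This proves statement (3) and the first sentence of statement (1), since $\Delta=0$ occurs exactly when $h$ has a repeated root. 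For statement (2) it suffices to note that $(z^{2}-1)^{2}$ (all roots real) and $z^{2}(z^{2}+1)$ (two real, two non-real) both have $\Delta=0$, so the vanishing of $\Delta$ is genuinely inconclusive.

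It remains to separate ``all real'' from ``all non-real'' when $\Delta>0$, and this is where the cubic resolvent enters. Factoring $h(z)=(z^{2}+\alpha z+\beta)(z^{2}-\alpha z+\gamma)$ and eliminating $\beta,\gamma$ yields $R(u)=u^{3}+2qu^{2}+(q^{2}-4s)u-r^{2}$ with $u=\alpha^{2}$, whose three roots are $(z_i+z_j)^{2}$ over the three splittings of $\{z_1,z_2,z_3,z_4\}$ into pairs (here $z_1+z_2+z_3+z_4=0$). A short computation with Vieta's formulas shows that $\mathrm{disc}(R)=\Delta$; hence, when $\Delta>0$, $R$ has three distinct real roots, whose product is $r^{2}\ge 0$, sum is $-2q$, and pairwise-product sum is $q^{2}-4s$. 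By Vieta these three roots are all positive precisely when $-2q>0$ and $q^{2}-4s>0$ (the product being automatically nonnegative), which is exactly the hypothesis in (1)(a); otherwise an even number of them is negative, forcing exactly one positive root, which is case (1)(b). Finally one shows that the quartic roots are all real if and only if the resolvent roots are all positive: from a positive root $u=\alpha^{2}$ one builds real quadratic factors $z^{2}\pm\alpha z+\beta,\ z^{2}\mp\alpha z+\gamma$, and the two sub-discriminants $\alpha^{2}-4\beta$ and $\alpha^{2}-4\gamma$ --- with sum $-2u-4q$ and product $-3u^{2}-4qu+16s$ --- turn out to be simultaneously nonnegative exactly in the all-positive regime.

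The main obstacle I anticipate is this last equivalence: translating ``the resolvent roots are positive'' into ``the quadratic factors actually split over $\R$'' forces one to pin down which root of $R$ to use and to control the signs of both sub-discriminants at once, and the degenerate boundary cases ($r=0$, or a resolvent root equal to $0$, so that a root structure sits on the boundary between the regimes) have to be handled separately. Everything else is routine bookkeeping with the product formula for $\Delta$ and Vieta's relations.
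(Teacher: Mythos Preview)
The paper does not prove Theorem~\ref{qd}; it is simply quoted as a classical result from Irving's text (\cite{quartic}, pp.~167--170), so there is no in-paper argument to compare your proposal against.

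That said, your outline is the standard textbook route and is essentially sound. The sign analysis of $\Delta=\prod_{i<j}(z_i-z_j)^2$ according to the three real/non-real configurations is correct, as is the identification of the resolvent roots with the quantities $(z_i+z_j)^2$ and the observation that $\mathrm{disc}(R)=\Delta$. Your Vieta argument that, under $\Delta>0$, the resolvent roots are all positive precisely when $q<0$ and $q^2-4s>0$ is also valid once one notes that two negative roots with positive sum force the elementary symmetric sum of pairwise products to be negative (a one-line inequality), ruling out that configuration.

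The one place that genuinely requires care is exactly the step you flag: the equivalence ``all resolvent roots positive $\Longleftrightarrow$ all quartic roots real.'' The forward direction is not automatic from having \emph{one} positive $u=\alpha^2$; one must either (i) argue directly from the explicit formulas $(z_i+z_j)^2$---in the two-conjugate-pair case, two of the three pairings give $(i(\nu_1\pm\nu_2))^2\le 0$, so not all three can be positive---or (ii) carry through the sub-discriminant computation you set up and show that the sum $-2u-4q$ and product $-3u^2-4qu+16s$ are both nonnegative for the \emph{largest} root $u$ of $R$ exactly in the all-positive regime. Either approach closes the gap; the boundary cases ($r=0$ or a resolvent root equal to $0$) correspond to $\Delta=0$ and fall under item~(2), where the statement makes no claim.
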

 
\section{Properties of Legendre Multiplier Sequences} \label{prop}
We begin this section with the definition of a Legendre multiplier sequence. 
\begin{definition}
Let $\seq{\gamma_k}$ be a sequence of real numbers. If
\[
\displaystyle\sum_{k=0}^{n} a_k \gamma_k \Le_k(x) 
\]
has only real zeros whenever 
\[
p(x)= \displaystyle\sum_{k=0}^{n} a_k \Le_k(x) 
\]
has only real zeros, we say that $\seq{\gamma_k}$ is a Legendre multiplier sequence.
\end{definition}
In the introduction we defined the Legendre polynomials via a generating function. Alternatively, the $n^{th}$ Legendre polynomial $\Le_n(x)$ can also be obtained from Rodrigues' formula (see for example p.162 \cite{rainville}):
\begin{equation}\label{rod}
\Le_n(x)=\frac{1}{2^{n} n!}D^n[(x^2-1)^n].
\end{equation}
\noindent An immediate consequence of equation (\ref{rod}) is that $\deg \Le_n(x)=n$. Therefore the Legendre polynomials form a simple set. In addition, 
\[
\int_{-1}^1 \Le_n(x) \Le_m(x)dx=\left\{\begin{array}{cl} 0 & \textrm{if} \ n \neq m \\ \ds{\frac{2}{2n+1}} & \textrm{if} \ n=m\end{array} \right. ,
\]
and hence they also form an orthogonal set on the interval $[-1,1]$. Orthogonality and simplicity of $\seq{\Le_k(x)}$ has profound consequences regarding the zeros of the Legendre polynomials (see \cite{sz}, p. 43-45). In particular: \\

\noindent (i) $\Le_n(x)$ has $n$ simple real zeros in $[-1,1]$ for $n=0,1,2, \ldots$\\
(ii) $\Le_n(x)$ and $\Le_{n-1}(x)$ have interlacing zeros for $n=1,2, 3, \ldots$ \\
(iii) $a \Le_n(x) + b \Le_{n-1}(x)$ has only real zeros for any $a, b \in \R$, $n=0,1,2, \ldots$\\

\smallskip
Property (iii) above says that any sequence of the form $(\ldots,0,0, a,b, 0, 0, 0, \ldots), \ a,b \in \R$ is a Legendre multiplier sequence. In addition to these, every constant sequence is also a Legendre multiplier sequence. We refer to these two types of sequences as {\it trivial Legendre multiplier sequences}. Unless explicitly stated otherwise, In the remainder of this paper we only consider non-trivial multiplier sequences.\\
\indent Since the Legendre polynomials form a simple set, by Theorem \ref{qmscms} every Legendre multiplier sequence is also a classical multiplier sequence. As such, Legendre multiplier sequences inherit a list of properties from the classical multiplier sequences, which we list in the next lemma.

\begin{lemma}\label{props}
Let $\seq{\gamma_k}$ be a Legendre multiplier sequence.  The following statements hold:
\begin{enumerate}
\item[(i)] If there exists integers $n> m \ge 0$ such that $\gamma_m \neq 0$ and $\gamma_n =0$, then $\gamma_k =0$ for all $k \ge n$.
\item[(ii)] The terms of $\seq{\gamma_k}$ are either all of the same sign, or they alternate in sign.
\item[(iii)] The terms of $\seq{\gamma_k}$ satisfy Tur\'{a}n's inequality:
\[
\gamma_k^2 -\gamma_{k-1} \gamma_{k+1} \ge 0 \hskip20pt (k=1,2,3, \ldots).
\]
\end{enumerate}
\end{lemma}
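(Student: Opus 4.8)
The plan is to reduce the lemma to the classical setting. By Theorem~\ref{qmscms} a Legendre multiplier sequence $\seq{\gamma_k}$ is a classical multiplier sequence, so it suffices to prove (i)--(iii) for an arbitrary classical multiplier sequence, whose operator acts by $T[x^k]=\gamma_k x^k$ on $\R[x]$. Each of the three is a standard property of classical multiplier sequences, and I would obtain all of them by one device: apply $T$ to a carefully chosen polynomial with only real zeros and read off the constraint that real-rootedness of the image places on the $\gamma_k$.

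Property (iii) is quickest. Since $x^{k-1}(x+1)^2=x^{k-1}+2x^{k}+x^{k+1}$ has only real zeros, so does
\[
T\bigl[x^{k-1}(x+1)^2\bigr]=x^{k-1}\bigl(\gamma_{k+1}x^2+2\gamma_k x+\gamma_{k-1}\bigr) ,
\]
and if $\gamma_{k+1}\ne 0$ this forces the discriminant $4\gamma_k^2-4\gamma_{k-1}\gamma_{k+1}$ of the quadratic factor to be nonnegative, while if $\gamma_{k+1}=0$ the inequality in (iii) is trivial.

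For (i), assume $\gamma_m\ne 0$ and $\gamma_n=0$ with $m<n$, and suppose toward a contradiction that $\gamma_\ell\ne 0$ for some $\ell>n$. Replacing $m$ by the largest index $<n$ with $\gamma_m\ne 0$ and $\ell$ by the smallest index $>m$ with $\gamma_\ell\ne 0$ (these exist, and $\ell>n$), we may assume $\gamma_j=0$ for $m<j<\ell$ and $d:=\ell-m\ge 2$. Feeding $T$ the real-rooted polynomial $x^{m}(1+tx)^{d}$ ($t>0$) annihilates every middle term and leaves $x^{m}\bigl(\gamma_m+t^{d}\gamma_\ell x^{d}\bigr)$; a binomial $\gamma_m+t^{d}\gamma_\ell x^{d}$ with nonzero coefficients has only real zeros only if $d\le 2$, so $d=2$, and then only if $-\gamma_m/(t^2\gamma_\ell)\ge 0$, i.e. $\gamma_m\gamma_\ell<0$. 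But feeding $T$ the real-rooted polynomial $x^{m}(x^2-a^2)$ ($a>0$, and now $\ell=m+2$) yields $x^{m}\bigl(\gamma_\ell x^2-a^2\gamma_m\bigr)$, whose real-rootedness forces $\gamma_m\gamma_\ell\ge 0$, a contradiction; this proves (i). From (i) the support $\{k:\gamma_k\ne 0\}$ is an interval of integers $\{a,a+1,\ldots,b\}$ (once some term vanishes after an earlier nonzero term, all later terms vanish), and applying $T$ to $x^{k-1}(x^2-1)$ shows $\gamma_{k-1}\gamma_{k+1}\ge 0$ for every $k$, hence $>0$ for $a<k<b$; so the even-indexed nonzero terms share one sign and the odd-indexed nonzero terms share one sign, and each of the four possibilities for this pair of signs yields a sequence that is either of constant sign or strictly alternating. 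This is (ii).

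There is no deep obstacle; the content is elementary, and the care goes into the bookkeeping: the degenerate case $d=2$ of (i), where the first test polynomial only gives $\gamma_m\gamma_\ell<0$ and a second is genuinely needed to finish; the treatment of zero terms in (i)--(ii); and the (excluded) trivial sequences, for which (i) is vacuous. One could instead deduce (i) and (ii) from the P\'olya--Schur/Laguerre--P\'olya structure theory: once (ii) is available, Theorem~\ref{PolyaSchur} identifies $\sum_k \gamma_k x^k/k!$, after the substitution $x\mapsto -x$ and an overall sign, with a function in $\LP^+$, whose Weierstrass product makes the interval structure of the support transparent. But a proof of (ii) \emph{via} Theorem~\ref{PolyaSchur} (stated only for nonnegative sequences) would be circular, the sign dichotomy being exactly what the reduction to $\LP^+$ needs, so I would keep to the self-contained test-polynomial route.
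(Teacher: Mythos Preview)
Your proposal is correct and follows the same route the paper takes: reduce to the classical case via Theorem~\ref{qmscms} and then invoke the known properties (i)--(iii) of classical multiplier sequences. The paper in fact states the lemma without proof, treating (i)--(iii) as standard facts about classical multiplier sequences inherited through Theorem~\ref{qmscms}; you go further and supply self-contained test-polynomial arguments for each, which are all sound (including the careful handling of the $d=2$ case in (i) with the second test polynomial $x^m(x^2-a^2)$).
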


The remainder of this section is dedicated to showing that the set of Legendre multiplier sequences form a strict subset of the Hermite multiplier sequences. A similar result was proved by Forg\'{a}cs and Piotrowski in \cite{fp}. In the interest of self-containment, we outline their argument here, mostly without proof, noting that we merely made the necessary changes to obtain the result about Legendre multiplier sequences. 

\begin{lemma}\label{openclosed}
Let $p$ and $q$ be real polynomials with $\deg(q)<\deg(p)$.  
\begin{itemize}
\item[(i)] If $p$ has only simple real zeros then there exists $\epsilon>0$ such that $p(x)+ b q(x)$ has only real zeros whenever $|b|<\epsilon$. 
\item[(ii)] If $p$ has some non-real zeros then there exists $\epsilon>0$ such that $p(x)+ b q(x)$ has some non-real zeros whenever $|b|<\epsilon$. 
\end{itemize}
\end{lemma}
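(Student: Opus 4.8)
The plan is to treat both parts as soft ``openness'' statements: the zeros of a polynomial depend continuously on its coefficients, so each of the conditions ``only simple real zeros'' and ``some non-real zero'' survives a sufficiently small perturbation of the prescribed one-parameter form $p+bq$. Rather than quote a continuity-of-roots theorem, I would give self-contained arguments: part (i) via the intermediate value theorem, part (ii) via Rouch\'e's theorem (Hurwitz's theorem would work equally well). The one observation common to both parts is that $\deg q<\deg p$ forces the leading coefficient of $p+bq$ to equal that of $p$, so $\deg(p+bq)=\deg p=:n$ for every $b\in\R$.

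For (i), I would first list the simple real zeros $x_1<x_2<\cdots<x_n$ of $p$ and insert test points $y_0<x_1<y_1<x_2<\cdots<x_n<y_n$, one strictly inside each gap between consecutive zeros together with one to the left of $x_1$ and one to the right of $x_n$. Since each zero is simple, $p$ changes sign there, so $p(y_{i-1})$ and $p(y_i)$ have opposite signs for $i=1,\dots,n$. Setting $m=\min_{0\le i\le n}|p(y_i)|>0$, $M=\max_{0\le i\le n}|q(y_i)|$ and $\epsilon=m/(M+1)$, for $|b|<\epsilon$ the term $|bq(y_i)|$ is too small to change the sign of $p$ at any $y_i$, hence $p+bq$ changes sign on each of the $n$ intervals $(y_{i-1},y_i)$ and so has at least $n$ real zeros; since $\deg(p+bq)=n$, these are all of its zeros and they are real.

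For (ii), I would choose a non-real zero $z_0$ of $p$ and a radius $\rho>0$ small enough that the closed disk $\{\,z:|z-z_0|\le\rho\,\}$ meets neither the real axis nor any zero of $p$ other than $z_0$. Then $\delta=\min_{|z-z_0|=\rho}|p(z)|>0$, and with $M=\max_{|z-z_0|=\rho}|q(z)|$ and $\epsilon=\delta/(M+1)$, for $|b|<\epsilon$ we have $|bq(z)|<|p(z)|$ on the circle $|z-z_0|=\rho$; Rouch\'e's theorem then shows that $p+bq$ has as many zeros (counted with multiplicity) in the disk as $p$ does, in particular at least one, and that zero is non-real by the choice of $\rho$.

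I do not anticipate any genuine obstacle here; the delicate points are purely organizational. In (i) one must remember to invoke the degree bound to upgrade ``at least $n$ real zeros'' to ``exactly $n$ zeros, all real'', and in (ii) one should be content with producing a single non-real zero rather than tracking full conjugate pairs, which keeps the multiplicity count harmless. The ``$+1$'' in the definition of $\epsilon$ is included only to cover the degenerate case in which $q$ vanishes at every chosen test point.
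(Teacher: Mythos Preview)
Your argument is correct in both parts: the intermediate value theorem count in (i), combined with the observation that $\deg(p+bq)=\deg p$ for all $b$, gives exactly the right conclusion, and the Rouch\'e estimate in (ii) is the standard way to trap a non-real zero under perturbation. There is nothing to compare against here, since the paper states this lemma without proof (it is borrowed from \cite{fp} and listed among the results the authors ``outline \ldots mostly without proof'').
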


\begin{lemma}\label{bmax}
For $n\geq 2$ and $b\in\mathbb{R}$, define
\[
f_{n, b, \alpha} (x) := \Le_n(x)+b \Le_{n-2}(x), \hskip3pt \text{and}
\]
\[
E_n:=\{ b \in \mathbb{R} \ | \ f_{n, b, \alpha}(x) \hskip 5pt \text{has only real zeros}\}.
\]
Then $\max(E_n)$ exists, and is a positive real number.
\end{lemma}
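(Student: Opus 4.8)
\emph{Proof proposal.} The plan is to establish three facts about $E_n\subseteq\R$: it is nonempty and contains a neighbourhood of $0$; it is closed; and it is bounded above. Granting these, $\sup E_n$ is a finite real number, it is attained since $E_n$ is closed, and it is positive since $E_n$ contains an interval $(-\eps,\eps)$; hence $\max(E_n)$ exists and is a positive real number.

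For the first fact, $\Le_n$ has $n$ simple real zeros, so $0\in E_n$, and Lemma \ref{openclosed}(i) applied with $p=\Le_n$ and $q=\Le_{n-2}$ (legitimate because $n\geq 2$ forces $\deg q<\deg p$) yields $\eps>0$ with $(-\eps,\eps)\subseteq E_n$. For closedness I would show the complement is open: if $b\notin E_n$ then $\Le_n+b\Le_{n-2}$ has a non-real zero, and Lemma \ref{openclosed}(ii) applied with $p=\Le_n+b\Le_{n-2}$ and $q=\Le_{n-2}$ produces $\delta>0$ such that $f_{n,b+c,\alpha}=\Le_n+(b+c)\Le_{n-2}$ has a non-real zero whenever $|c|<\delta$, so $(b-\delta,b+\delta)$ misses $E_n$.

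Boundedness of $E_n$ from above is the step I expect to be the main obstacle. Let $k_m>0$ denote the leading coefficient of $\Le_m$. As $b\To+\infty$, two zeros of $f_{n,b,\alpha}$ escape to infinity while the rest converge to the zeros of $\Le_{n-2}$, and the point is to show the escaping pair is non-real for all large $b$. I would rescale via $x=\sqrt{b}\,y$ and normalize: since $\Le_n(\sqrt{b}\,y)/b^{n/2}\To k_ny^n$ and $b\,\Le_{n-2}(\sqrt{b}\,y)/b^{n/2}\To k_{n-2}y^{n-2}$ coefficientwise, the normalized polynomial $f_{n,b,\alpha}(\sqrt{b}\,y)/b^{n/2}$ converges coefficientwise to $y^{n-2}(k_ny^2+k_{n-2})$, whose zeros include the non-real numbers $\pm i\sqrt{k_{n-2}/k_n}$. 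Since $k_{n-2}/k_n>0$ strictly, these limit zeros stay a fixed distance from $\R$, so by continuous dependence of the zeros of a degree-$n$ polynomial on its coefficients (Hurwitz's theorem, or a Rouch\'e estimate on small discs about $\pm i\sqrt{k_{n-2}/k_n}$) there is $B>0$ with $f_{n,b,\alpha}$ possessing a non-real zero for every $b>B$; hence $E_n\subseteq(-\infty,B]$. Combining the three facts, $\max(E_n)=\sup E_n\geq\eps>0$.
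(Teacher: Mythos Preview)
The paper does not actually supply its own proof of Lemma~\ref{bmax}: it is one of the results ``outlined\ldots\ mostly without proof'' and attributed to the argument in \cite{fp}. So there is nothing in the paper to compare your argument against directly.

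That said, your proof stands on its own. The three-step plan (nonempty open interval about $0$, closedness via Lemma~\ref{openclosed}(ii), boundedness above via a rescaling/Hurwitz argument) is exactly the right decomposition, and each step is carried out correctly. In the closedness step, note that $\deg(\Le_n+b\Le_{n-2})=n>n-2=\deg\Le_{n-2}$ because the leading term of $\Le_n$ cannot be cancelled by $b\Le_{n-2}$, so the degree hypothesis of Lemma~\ref{openclosed} is indeed met. In the boundedness step your scaling computation is accurate: writing $\Le_m(x)=k_mx^m+\cdots$ with $k_m>0$, one has
\[
b^{-n/2}f_{n,b,\alpha}(\sqrt{b}\,y)\;\longrightarrow\;k_ny^n+k_{n-2}y^{n-2}=y^{n-2}\bigl(k_ny^2+k_{n-2}\bigr)
\]
coefficientwise (hence uniformly on compacta, since the degree is fixed at $n$), and the two simple non-real zeros $\pm i\sqrt{k_{n-2}/k_n}$ of the limit polynomial are isolated away from $\R$, so a Rouch\'e/Hurwitz argument on small discs about them gives the desired upper bound $B$ on $E_n$. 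The conclusion $\max(E_n)=\sup E_n\in[\eps,B]$ then follows.
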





\begin{proposition} \label{gammajunk} Suppose that $\{\gamma_k\}_{k=0}^{\infty}$ is a non-trivial Legendre multiplier sequence. Then there exists an $m \in \mathbb{Z}$ such that $\gamma_k=0$ for all $k<m$ and $\gamma_k \neq 0$ for all $k \geq m$. 
\end{proposition}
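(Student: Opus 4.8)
The plan is to show that a non-trivial Legendre multiplier sequence cannot have a ``gap'' of zeros in the middle, and that once it is non-zero it stays non-zero forever. Part of this is already in hand: Lemma \ref{props}(i) tells us that if $\gamma_n = 0$ for some $n$ that lies above a non-zero term $\gamma_m$, then $\gamma_k = 0$ for all $k \ge n$. Combined with non-triviality, this rules out a sequence that is eventually zero (such a sequence would be of the form $(\ldots, 0, a, b, 0, 0, \ldots)$ or shorter after we discard leading zeros, hence trivial). So the only thing left to prove is that the set of indices $k$ with $\gamma_k \ne 0$ has no gaps: if $\gamma_m \ne 0$ and $\gamma_n \ne 0$ with $m < n$, then $\gamma_k \ne 0$ for every $m \le k \le n$.

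Suppose for contradiction that there is an index $j$ with $m < j < n$, $\gamma_m \ne 0$, $\gamma_j = 0$, $\gamma_n \ne 0$, and (taking $m$ largest below $j$ with $\gamma_m \ne 0$ and $n$ smallest above $j$ with $\gamma_n \ne 0$) with all intermediate $\gamma$'s between $m$ and $n$ equal to zero except possibly one isolated nonzero term. The cleanest route is to reduce to the case of a single interior zero, i.e. $\gamma_{k-1} \ne 0$, $\gamma_k = 0$, $\gamma_{k+1} \ne 0$: if we can exhibit one real-rooted input polynomial on which $T$ produces a non-real zero, we are done. The natural candidate is built from Lemma \ref{bmax}: take $p(x) = \Le_{k+1}(x) + b\,\Le_{k-1}(x)$ with $b = \max(E_{k+1}) > 0$, which by construction has only real zeros (in fact, at the maximal $b$ the polynomial has a multiple real zero, since $E_{k+1}$ is closed and its boundary point corresponds to a repeated root — this is exactly the content of Lemma \ref{openclosed}(i), which forces $E_{k+1}$ to have non-empty interior and hence the maximum to sit at a degenerate configuration). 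Applying $T$ gives $\gamma_{k+1}\Le_{k+1}(x) + \gamma_k b\,\Le_{k-1}(x) = \gamma_{k+1}\Le_{k+1}(x)$, a scalar multiple of $\Le_{k+1}$, which does have only real zeros — so this particular $p$ does not yield the contradiction directly. Instead, perturb: consider $p_\delta(x) = \Le_{k+1}(x) + (b+\delta)\Le_{k-1}(x)$ for small $\delta > 0$, which has non-real zeros (we are just past $\max(E_{k+1})$), yet $T[p_\delta]= \gamma_{k+1}\Le_{k+1}(x)$ still — again real-rooted. To make $T$ actually move the input, we need the target of $T$ to differ from a rescaling of the input; so the correct test polynomial must involve an index $k$ where $\gamma_k = 0$ sitting strictly between two nonzero indices that are \emph{not} two apart, or we must incorporate a third Legendre polynomial. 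Thus the right construction is $p(x) = a\,\Le_n(x) + \Le_j(x)$ (or a three-term combination spanning indices $m, j, n$), chosen real-rooted, with $T[p] = a\gamma_n \Le_n(x) + 0 = a\gamma_n\Le_n(x)$, and then one perturbs $p$ by a small multiple of $\Le_j$ to stay real-rooted (Lemma \ref{openclosed}(i) applied with a suitable simple-real-rooted $p$) while the image is frozen; comparing degrees then gives that $T$ collapses a degree-$n$ real-rooted family onto a single line spanned by $\Le_n$, and a dimension/continuity argument à la Forg\'acs--Piotrowski produces a real-rooted input whose image is $\Le_n$ plus a forced non-real perturbation — contradiction.

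The key steps, in order, are: (1) use Lemma \ref{props}(i) plus the definition of non-triviality to eliminate eventually-zero sequences; (2) set up the contradiction hypothesis of an interior zero $\gamma_j = 0$ flanked by nonzero $\gamma_m, \gamma_n$; (3) build an explicit real-rooted polynomial $p$ of degree $n$ supported on Legendre indices including $j$, using Lemma \ref{bmax} to locate the extreme parameter value where real-rootedness is lost; (4) invoke Lemma \ref{openclosed} to pass between the open condition (simple real zeros persist under small perturbation) and the closed/boundary behavior, showing that applying $T$ sends a full neighborhood of real-rooted inputs into a lower-dimensional image that must contain a non-real-rooted polynomial; (5) conclude $\seq{\gamma_k}$ cannot have an interior zero, so the support is an interval $[m, \infty)$.

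The main obstacle is step (4): making the passage from ``$T$ annihilates the $\Le_j$-component'' to ``$T$ produces a genuinely non-real zero'' rigorous. The subtlety is that killing one coefficient does not automatically destroy real-rootedness — one has to choose the flanking polynomial (the $\Le_n$ or $\Le_m$ piece) and the perturbation direction so that the image polynomial is pinned near a real-rooted boundary configuration from Lemma \ref{bmax} while the input is pushed just inside the real-rooted region, exploiting that the map $b \mapsto$ (zeros of $f_{n,b,\alpha}$) and the map $b \mapsto$ (zeros of $T[f_{n,b,\alpha}]$) have mismatched critical values precisely because $\gamma_j = 0 \ne \gamma_{j\pm\text{(flank)}}$. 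I expect this to require a careful continuity/degeneracy argument rather than a slick one-line trick, mirroring the structure of the proof in \cite{fp}.
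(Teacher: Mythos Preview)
First, a framing note: the paper does not actually supply a proof of this proposition. It is one of the results the authors import from the Forg\'acs--Piotrowski argument in \cite{fp} and state here ``mostly without proof.'' So there is no in-paper proof to compare against line by line; what follows is an assessment of your proposal on its own terms.

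Your proposal has the two halves of the argument reversed. You claim that Lemma~\ref{props}(i) together with non-triviality ``rules out a sequence that is eventually zero (such a sequence would be of the form $(\ldots,0,a,b,0,0,\ldots)$ or shorter \ldots\ hence trivial),'' and that what remains is to show the support has no gaps. It is exactly the other way around. Lemma~\ref{props}(i) says that once a zero appears above a nonzero term, all subsequent terms vanish; this already forces the support to be an interval, so ``no gaps'' is free. What Lemma~\ref{props}(i) does \emph{not} give is that this interval is infinite: a sequence of the shape $(0,\ldots,0,\gamma_m,\gamma_{m+1},\ldots,\gamma_N,0,0,\ldots)$ with $N-m\ge 2$ and all displayed $\gamma$'s nonzero is perfectly consistent with Lemma~\ref{props}(i)--(iii), and it is not trivial in the paper's sense. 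Ruling out this finite-support case is the actual content of the proposition, and your step~(1) simply asserts it without argument.

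As a consequence, the machinery you deploy in steps (2)--(5) --- Lemma~\ref{bmax}, Lemma~\ref{openclosed}, and the perturbation idea --- is aimed at the wrong target. You set up a contradiction from an interior zero $\gamma_j=0$ flanked by $\gamma_m,\gamma_n\ne 0$, but that configuration is already excluded by Lemma~\ref{props}(i). The configuration you actually need to attack is $\gamma_{N-2},\gamma_{N-1},\gamma_N\ne 0$ with $\gamma_{N+1}=\gamma_{N+2}=\cdots=0$. And, as you yourself discover mid-argument, applying $T$ to the two-term test polynomials $\Le_n+b\,\Le_{n-2}$ in that situation simply kills the top term and returns a scalar multiple of a Legendre polynomial --- no non-real zeros appear. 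Your closing paragraph correctly senses that a more delicate choice of test polynomial (in the spirit of \cite{fp}) is required, but the proposal never identifies what that choice is, nor points it at the right hypothesis.
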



\begin{theorem}\label{inclusion}  If the sequence of non-negative real numbers $\displaystyle{\{\gamma_k\}_{k=0}^{\infty}}$ is a non-trivial Legendre multiplier sequence, then $\gamma_k \leq \gamma_{k+1}$ for all $k \geq 0$.
\end{theorem}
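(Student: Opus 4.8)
The plan is to combine two facts that are already available: Lemma~\ref{bmax}, which for every $n\ge 2$ furnishes a largest real number $b_n:=\max(E_n)$, necessarily positive, such that $\Le_n(x)+b_n\Le_{n-2}(x)$ has only real zeros; and Turán's inequality from Lemma~\ref{props}(iii). The first fact will give monotonicity of $\{\gamma_k\}$ separately along the even- and odd-indexed subsequences, and Turán's inequality will splice these into full termwise monotonicity. To begin, I would dispose of the trivial indices: by Proposition~\ref{gammajunk} there is an integer $m$ with $\gamma_k=0$ for $k<m$ and $\gamma_k\neq 0$ --- hence $\gamma_k>0$, since the sequence is non-negative --- for $k\ge m$, so for $k<m$ the inequality $\gamma_k=0\le\gamma_{k+1}$ is automatic and only $k\ge m$ requires work.

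The core step is the claim that $\gamma_k\le\gamma_{k+2}$ for every $k\ge 0$. Fix $n\ge 2$ with $\gamma_n>0$. Since $b_n\in E_n$, the polynomial $p(x):=\Le_n(x)+b_n\Le_{n-2}(x)$ has only real zeros; applying the operator $T$ defined by $T[\Le_j]=\gamma_j\Le_j$, and using that $\{\gamma_k\}$ is a Legendre multiplier sequence, the polynomial
\[
T[p](x)=\gamma_n\Le_n(x)+b_n\gamma_{n-2}\Le_{n-2}(x)=\gamma_n\!\left(\Le_n(x)+\frac{b_n\gamma_{n-2}}{\gamma_n}\Le_{n-2}(x)\right)
\]
has only real zeros as well. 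Hence $b_n\gamma_{n-2}/\gamma_n\in E_n$, so $b_n\gamma_{n-2}/\gamma_n\le b_n=\max(E_n)$, and since $b_n>0$ and $\gamma_n>0$ this yields $\gamma_{n-2}\le\gamma_n$. Taking $n=k+2$ for each $k\ge 0$ with $\gamma_{k+2}>0$, and observing that $\gamma_k=0\le\gamma_{k+2}$ whenever instead $\gamma_{k+2}=0$, we obtain $\gamma_k\le\gamma_{k+2}$ for all $k\ge 0$.

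Finally I would bridge consecutive terms with Turán's inequality. For any $k\ge 0$, Lemma~\ref{props}(iii) gives $\gamma_{k+1}^2\ge\gamma_k\gamma_{k+2}$, while the core step gives $\gamma_{k+2}\ge\gamma_k\ge 0$; together these force $\gamma_{k+1}^2\ge\gamma_k^2$, and since all terms are non-negative we conclude $\gamma_{k+1}\ge\gamma_k$, as desired.

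I expect the real difficulty of the whole circle of ideas to sit in Lemma~\ref{bmax} rather than in this argument: it is there that one must verify $E_n$ is bounded above, i.e.\ that $\Le_n(x)+b\Le_{n-2}(x)$ loses reality of its zeros once $b$ is large, which is precisely where the particular structure of the Legendre polynomials is needed. Granting that lemma, the only genuine idea in the present proof is the pairing of even/odd monotonicity with Turán's inequality; it is forced on us because combinations $a\Le_{k+1}(x)+b\Le_k(x)$ of \emph{consecutive} Legendre polynomials always have only real zeros (item (iii) of the list opening Section~\ref{prop}), so they impose no constraint relating $\gamma_k$ and $\gamma_{k+1}$, and the detour through $\Le_{n-2}$ is unavoidable.
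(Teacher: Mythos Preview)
Your proof is correct and follows essentially the same route as the paper's: use Lemma~\ref{bmax} on $\Le_n+b_n\Le_{n-2}$ to force $\gamma_{n-2}\le\gamma_n$, then combine this two-step monotonicity with Tur\'an's inequality to obtain $\gamma_{k}\le\gamma_{k+1}$. The only differences are cosmetic---you handle the vanishing terms via Proposition~\ref{gammajunk} a bit more explicitly, and you bound $\gamma_{k+1}^2\ge\gamma_k\gamma_{k+2}\ge\gamma_k^2$ directly rather than dividing through by $\gamma_{n-2}^2$ as the paper does---but the argument is the same.
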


\begin{proof} 
Let $T_\Le$ denote the operator associated to the Legendre multiplier sequence $\{\gamma_k\}_{k=0}^{\infty}$. Suppose $n\geq 2$ and that $\gamma_{n-2}\neq 0$. By Proposition \ref{gammajunk}, we have $\gamma_n \neq 0$.  Using the notation of Lemma \ref{bmax}, the function
\[
f_{n,\beta_n^*,\alpha}(x)=\Le_n(x)+\beta^*_n \Le_{n-2}(x) \qquad (\beta_n^* = \max(E_n))
\]
has only real zeros. It follows that 
\[
T_\Le[f_{n,\beta_n^*,\alpha}(x)]=\gamma_n \Le_n(x)+\gamma_{n-2}\beta^*_n \Le_{n-2}(x)=\gamma_{n} \left(\Le_n(x)+\frac{\gamma_{n-2}}{\gamma_n}\beta^*_n \Le_{n-2}(x) \right)
\]
also has only  real zeros. By Lemma \ref{bmax}, we must have $\displaystyle{\frac{\gamma_{n-2}}{\gamma_n}\beta^*_n \leq \beta_n^*}$, which gives $\displaystyle{0 < \frac{\gamma_{n-2}}{\gamma_n} \leq 1}$. On the other hand, by Lemma \ref{props}, we have
\[
\gamma_{n-1}^2-\gamma_{n}\gamma_{n-2} \geq 0, \quad \quad (n \geq 2),
\]
which means $\displaystyle{\left( \frac{\gamma_{n-1}}{\gamma_{n-2}}\right)^2 \geq \frac{\gamma_{n}}{\gamma_{n-2}}\geq1}$. In other words, $\gamma_{n-1} \geq \gamma_{n-2}$ and the proof is complete.
\end{proof} 

It follows that any non-trivial Legendre multiplier sequence with non-negative terms is non-decreasing. By Lemma \ref{props} we conclude that the terms of any non-trivial Legendre multiplier sequence are non-decreasing in magnitude. In \cite{p} Piotrowski proved that any classical multiplier sequence whose terms are non-decreasing in magnitude is a Hermite multiplier sequence. Since every trivial Legendre multiplier sequence is also a Hermite multiplier sequence, we have the following theorem.

\begin{theorem} The set of Legendre multiplier sequences forms a strict subset of the set of Hermite multiplier sequences.
\end{theorem}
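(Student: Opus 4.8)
The plan is to assemble the final theorem from the pieces that have already been put in place, so the ``proof'' here is mostly an accounting exercise. First I would observe that Theorem \ref{inclusion}, together with Lemma \ref{props}(ii), shows that every non-trivial Legendre multiplier sequence has terms that are non-decreasing in magnitude: if the terms are all of the same sign, apply Theorem \ref{inclusion} directly (after factoring out a sign if they are all non-positive); if they alternate in sign, pass to the sequence $\{|\gamma_k|\}_{k=0}^\infty$, which is again a Legendre multiplier sequence by the symmetry of the zero set of $\sum a_k \Le_k(x)$ under $x \mapsto -x$ (equivalently, because $\Le_k(-x) = (-1)^k \Le_k(x)$), and then invoke Theorem \ref{inclusion}. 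I would then cite Piotrowski's result from \cite{p} that any classical multiplier sequence non-decreasing in magnitude is a Hermite multiplier sequence, and recall (Theorem \ref{qmscms}) that every Legendre multiplier sequence is a classical multiplier sequence. Combining these with the already-noted fact that every trivial Legendre multiplier sequence (constant sequences and sequences of the form $(\ldots,0,a,b,0,\ldots)$) is a Hermite multiplier sequence gives the inclusion of Legendre multiplier sequences into Hermite multiplier sequences.

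The second half of the statement — that the inclusion is \emph{strict} — requires exhibiting a Hermite multiplier sequence that is \emph{not} a Legendre multiplier sequence. The natural candidates are sequences that grow too fast: Theorem \ref{inclusion} forces $0 < \gamma_{n-2}/\gamma_n \le 1$, i.e. $\gamma_n \ge \gamma_{n-2}$, but more importantly the proof extracts the constraint from the extremal value $\beta_n^* = \max(E_n)$, and this is a genuine restriction not present for Hermite sequences. I would look for the simplest possible witness, for instance a geometric sequence $\{r^k\}$ with $r$ large, or a linear or quadratic sequence: by Piotrowski's characterization in \cite{p}, $\{r^k\}$ is a Hermite multiplier sequence for every $r \ge 1$ (indeed $\{(a+bk)\}$ and appropriate quadratics are Hermite multiplier sequences), whereas one checks using property (iii) of the Legendre zeros and Lemma \ref{bmax} — or more directly by evaluating $T_\Le$ on a specific low-degree Legendre combination with only real zeros and finding the image has non-real zeros — that such a sequence fails to be a Legendre multiplier sequence once $r$ (or the growth rate) exceeds an explicit threshold. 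Since the paper's Section \ref{polyint} classifies linear, quadratic, and geometric Legendre multiplier sequences, I would simply pull the counterexample from there: any geometric sequence $\{r^k\}$ that is a Hermite multiplier sequence but whose ratio $r$ lies outside the (necessarily bounded or otherwise constrained) range allowed for Legendre multiplier sequences does the job.

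The main obstacle is really just bookkeeping about the trivial sequences and the sign cases, since the substantive analytic content has been front-loaded into Theorem \ref{inclusion} and the cited results of Piotrowski. The one spot that deserves care is the reduction from ``non-decreasing in magnitude'' to an actual application of Piotrowski's theorem: I must make sure the version of that theorem being quoted is stated for sequences non-decreasing in magnitude (allowing alternating signs and allowing an initial block of zeros, as permitted by Proposition \ref{gammajunk}), and not merely for non-negative non-decreasing sequences, or else handle the alternating and leading-zero cases by an explicit reduction. Assuming that citation is in the form needed, the proof is a two-line synthesis; the strictness is then witnessed by a single explicit sequence drawn from the geometric classification in Section \ref{polyint}.
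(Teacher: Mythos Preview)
Your proposal is correct and follows essentially the same route as the paper: use Theorem \ref{inclusion} and Lemma \ref{props}(ii) to conclude that non-trivial Legendre multiplier sequences are non-decreasing in magnitude, invoke Piotrowski's result that such classical multiplier sequences are Hermite, handle the trivial cases separately, and then witness strictness with a geometric sequence $\{r^k\}$ for $|r|>1$ via Theorem \ref{geom}. Your explicit reduction of the alternating-sign case via $\Le_k(-x)=(-1)^k\Le_k(x)$ is a bit more careful than the paper, which simply gestures at Lemma \ref{props}, but the argument is the same.
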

\begin{proof} We have already established containment. To see that this containment is strict, we note that $\seq{r^k}$ is a Legendre multiplier sequence if and only if $|r|=1$ (see Theorem \ref{geom} in Section \ref{polyint}), while this sequence is a Hermite multiplier sequence for any $|r| \ge1$.
\end{proof}
We conclude this section with a diagram outlining the relationship between classical, Hermite, (generalized) Laguerre, and Legendre multiplier sequences.

\begin{center}
\includegraphics[scale=.4]{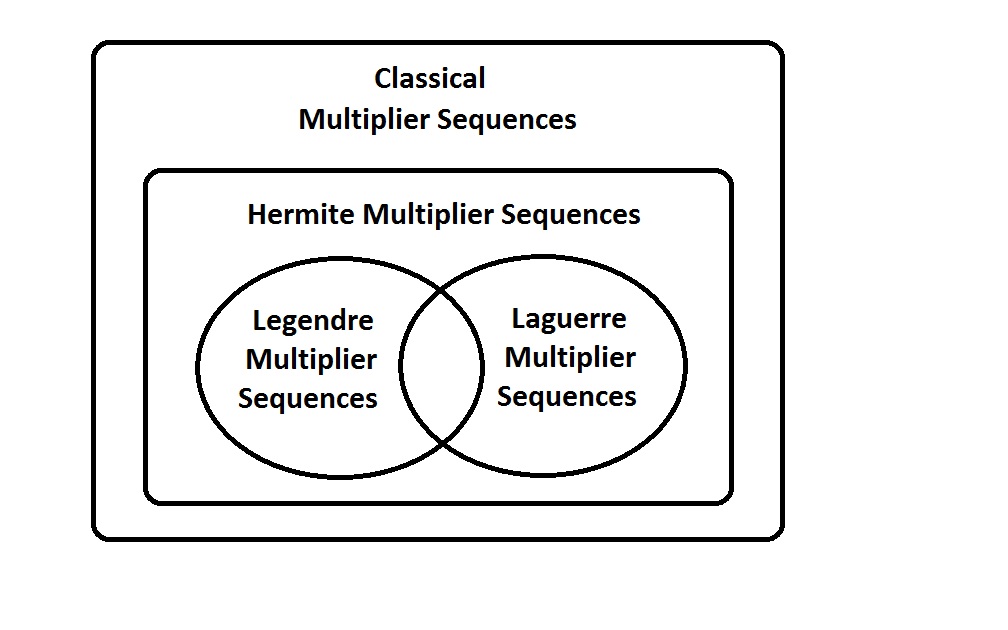}
\end{center}

\section{Polynomial type and Geometric Legendre Multiplier Sequences} \label{polyint}

In this section we classify linear and quadratic Legendre multiplier sequences. This line of investigation is motivated by the fact that sequences interpolated by polynomials form a large class of multiplier sequences for the standard and the Hermite bases, as the following two theorems demonstrate. The first one is due to Laguerre (\cite{cc}, p. 23), while the analogous result for the Hermite basis is due to Tur\'an (\cite{turan}, p. 289) and Bleecker and Csordas (\cite{bcs}, Theorem 2.7).

\begin{theorem} \label{cps} If $g \in \LP$ with zeros in the interval $(-\infty,0]$, then $\seq{g(k)}$ is a classical multiplier sequence. In particular, if $g$ is any real polynomial with only real, non-positive zeros , then $\seq{g(k)}$ is a classical multiplier sequence.
\end{theorem}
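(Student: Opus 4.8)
The plan is to deduce the statement from the P\'olya--Schur characterization (Theorem \ref{PolyaSchur}) together with two closure properties of the class $\LP^+$: closure under (coefficientwise) products and closure under locally uniform limits. First, a normalization. If $g\in\LP$ has all its zeros in $(-\infty,0]$, then $g$ admits the canonical representation $g(x)=c\,x^m e^{\sigma x}\prod_{j}(1+x/\alpha_j)$ with $c\in\R\setminus\set{0}$, $m\in\N\cup\set{0}$, $\sigma\ge 0$, $\alpha_j>0$ and $\sum_j 1/\alpha_j<\infty$; replacing $g$ by $g/c$ changes neither the hypotheses nor whether $\seq{g(k)}$ is a multiplier sequence, so I would assume $c>0$. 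Then every factor of $g$ has non-negative Taylor coefficients, whence $g\in\LP^+$, and $g(k)\ge 0$ for all $k\ge 0$, so Theorem \ref{PolyaSchur} is applicable to $\seq{g(k)}$.

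Next I would reduce to the case where $g$ is a polynomial. Truncating the product and replacing $e^{\sigma x}$ by $(1+\sigma x/n)^n$ produces polynomials $g_n$, each with only real non-positive zeros, with $g_n\to g$ locally uniformly on $\C$; in particular $g_n(k)\to g(k)$ for each fixed $k$. Granting the polynomial case, each $\seq{g_n(k)}$ is a multiplier sequence, so by part (2) of Theorem \ref{PolyaSchur} the polynomial $\sum_{k=0}^{d}\binom{d}{k}g_n(k)x^k$ lies in $\LP^+$ for every $d$ and every $n$. These polynomials have degree at most $d$ and their coefficients converge to those of $\sum_{k=0}^{d}\binom{d}{k}g(k)x^k$, so the limit lies in $\LP^+$; applying Theorem \ref{PolyaSchur} in the reverse direction shows $\seq{g(k)}$ is a multiplier sequence.

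It remains to treat a polynomial $g(x)=c\prod_{j=1}^{d}(x+a_j)$ with $c>0$ and each $a_j\ge 0$. I would use the elementary fact that the coefficientwise product of two classical multiplier sequences is again one -- if $T_1[x^k]=\gamma_k x^k$ and $T_2[x^k]=\delta_k x^k$ are reality preserving, then so is the diagonal operator $T_2\circ T_1\colon x^k\mapsto\gamma_k\delta_k x^k$ -- together with the triviality that scaling a multiplier sequence by a positive constant yields a multiplier sequence. Since $\seq{g(k)}$ is a positive multiple of a $d$-fold coefficientwise product of the sequences $\seq{k+a_j}$, it suffices to show $\seq{k+a}$ is a multiplier sequence for every real $a\ge 0$. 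For this I would invoke part (3) of Theorem \ref{PolyaSchur} and the one-line computation
\[
\sum_{k=0}^{\infty}\frac{k+a}{k!}\,x^k \;=\; x e^{x}+a e^{x}\;=\;(x+a)e^{x}\in\LP^+ ,
\]
which holds because $x+a$ and $e^x$ both belong to $\LP^+$ and $\LP^+$ is closed under multiplication. This settles the polynomial case, and with the reduction above, the theorem.

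I expect the substance of the argument to be light once Theorem \ref{PolyaSchur} is invoked; the one step warranting care is the passage to the limit, where one must verify both that the approximating polynomials $g_n$ genuinely have only non-positive zeros and that $\sum_{k=0}^{d}\binom{d}{k}g_n(k)x^k$ converges to its limit in a mode under which $\LP^+$ is closed -- which it does, since for polynomials of bounded degree coefficientwise convergence coincides with locally uniform convergence on $\C$. A more direct but fussier alternative would be to identify $\sum_{k}\frac{g(k)}{k!}x^k$ with $g(xD)[e^x]$ and argue that the (possibly infinite-order) differential operator $g(xD)$ maps $\LP^+$ into itself; I would sidestep this, as controlling that operator on $e^x$ is more delicate than the truncation above.
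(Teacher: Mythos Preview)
The paper does not supply a proof of this theorem; it is cited as a classical result of Laguerre (see the reference to \cite{cc}, p.~23, immediately preceding the statement), so there is no in-paper argument to compare against. Your proof is correct and self-contained: the reduction to linear factors $\seq{k+a}$ via termwise products of multiplier sequences, the identification $\sum_{k\ge 0}\frac{k+a}{k!}x^k=(x+a)e^x\in\LP^+$, and the passage to the limit through the Jensen polynomials $\sum_{k=0}^d\binom{d}{k}g_n(k)x^k$ are all sound.

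One small remark on scope. The factorization $g(x)=c\,x^m e^{\sigma x}\prod_j(1+x/\alpha_j)$ you invoke is the canonical form for $\LP^+$ (up to the sign of $c$), and it silently omits any Gaussian factor $e^{-ax^2}$. Read literally, the paper's hypothesis ``$g\in\LP$ with zeros in $(-\infty,0]$'' would admit a function such as $g(x)=e^{-x^2}$, which has no zeros and hence vacuously satisfies the zero-location condition, yet is not covered by your representation. In the standard literature Laguerre's theorem is stated for $\LP^+$, and the paper's phrasing should be read that way; your argument matches the intended scope exactly, and the discrepancy is an informality in the paper's wording rather than a gap in your proof.
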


\begin{theorem}\label{hps} If $g \in \LP^+$, then $\seq{g(k)}$ is a Hermite multiplier sequence. In particular, if $g$ is a real polynomial with only real, negative zeros, then $\seq{g(k)}$ is a Hermite multiplier sequence. 
\end{theorem}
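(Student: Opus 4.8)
The plan is to obtain Theorem~\ref{hps} by combining two results already available in the excerpt: Theorem~\ref{cps}, which turns $\seq{g(k)}$ into a \emph{classical} multiplier sequence, and the theorem of Piotrowski recalled in Section~\ref{prop}, namely that every classical multiplier sequence whose terms are non-decreasing in magnitude is a Hermite multiplier sequence. With these in hand, proving Theorem~\ref{hps} reduces to the elementary verification that, for $g\in\LP^+$, the sequence $\seq{g(k)}$ is both a classical multiplier sequence and non-decreasing in magnitude.

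For the first property I would use the standard fact that every function in $\LP^+$ has all of its zeros in $(-\infty,0]$; hence Theorem~\ref{cps} applies verbatim and $\seq{g(k)}$ is a classical multiplier sequence. For the second, it suffices to observe that $g=\sum_{k\ge 0}\frac{\gamma_k}{k!}x^k$ has every Taylor coefficient $\gamma_k\ge 0$, so that both $g$ and $g'=\sum_{k\ge 1}\frac{\gamma_k}{(k-1)!}x^{k-1}$ are non-negative on $[0,\infty)$; therefore $g$ is non-decreasing on $[0,\infty)$ and $g(k)\ge 0$ for all $k$, which gives $0\le g(k)\le g(k+1)$ for every $k$. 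Piotrowski's theorem then yields that $\seq{g(k)}$ is a Hermite multiplier sequence, which is the first assertion. For the ``in particular'' statement, a non-constant real polynomial with only real negative zeros is, once its leading coefficient is normalized to be positive, of the form $c\prod_j(x+|\beta_j|)$ with $c>0$ and each $|\beta_j|>0$, and hence lies in $\LP^+$; a constant polynomial gives a constant sequence, which is a Hermite multiplier sequence; and since scaling a multiplier sequence by a nonzero real constant does not change the reality of the zeros of $T[p]$, the sign of the leading coefficient is immaterial.

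I do not anticipate a genuine difficulty in the direct route; the care required is bookkeeping. One should confirm that Piotrowski's characterization is invoked in a form that also covers the mildly degenerate case in which the initial term vanishes (which happens exactly when $g(0)=0$; then $\gamma_0=0$ while $\gamma_k=g(k)>0$ for every $k\ge 1$, consistent with Lemma~\ref{props}(i)) together with the constant-sequence case. If one prefers a proof that does not lean on the full strength of Piotrowski's classification, an alternative route is to factor $g$ through its Weierstrass product into the elementary pieces $x^{m}$, $e^{\sigma x}$ ($\sigma\ge 0$), and the linear factors $1+x/\alpha_j$ ($\alpha_j>0$), and correspondingly to build $\seq{g(k)}$ from $\seq{k}$, the geometric sequence $\seq{(e^{\sigma})^{k}}$ (whose ratio is $\ge 1$), and the sequences $\seq{k+\alpha_j}$: one checks that each of these is a Hermite multiplier sequence, uses the fact that a Hadamard product of Hermite multiplier sequences is again one (the corresponding diagonal operators compose to a reality-preserving operator), and closes the possibly infinite product by a Hurwitz-type limiting argument. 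In that approach the Hurwitz limit is the only somewhat technical point, and establishing that $\seq{(e^{\sigma})^{k}}$ and $\seq{k+\alpha_j}$ are Hermite multiplier sequences would itself be done exactly as above, via Theorem~\ref{cps} and Piotrowski's monotonicity result.
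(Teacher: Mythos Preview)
The paper does not supply a proof of Theorem~\ref{hps}; the result is quoted from the literature with attribution to Tur\'an~\cite{turan} and Bleecker--Csordas~\cite{bcs}. So there is no ``paper's own proof'' to compare against.

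Your argument is mathematically sound. The two observations you isolate are both correct and do combine to give the conclusion: a nonzero $g\in\LP^+$ satisfies $g(x)\ge 0$ for $x\ge 0$ (since every Taylor coefficient is non-negative), which forces all of its real zeros into $(-\infty,0]$ and hence puts you inside the hypotheses of Theorem~\ref{cps}; and $g'(x)\ge 0$ on $[0,\infty)$ for the same reason, so $\seq{g(k)}$ is a non-negative, non-decreasing classical multiplier sequence, to which Piotrowski's criterion (quoted after Theorem~\ref{inclusion}) applies. The bookkeeping you mention for the degenerate cases ($g(0)=0$, constant $g$, sign of the leading coefficient) is routine.

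One caveat worth flagging is logical rather than technical: Piotrowski's result that a classical multiplier sequence with terms non-decreasing in magnitude is a Hermite multiplier sequence comes from his 2007 thesis and postdates the sources~\cite{turan},~\cite{bcs} to which Theorem~\ref{hps} is attributed. Before relying on it you should verify that Piotrowski's proof does not itself invoke Theorem~\ref{hps}, lest the argument be circular. Your alternative route via the Weierstrass factorization of $g$ into the building blocks $x^m$, $e^{\sigma x}$, and $1+x/\alpha_j$, handling each factor directly and closing the infinite product by Hurwitz, sidesteps this concern entirely and is closer in spirit to how such results are established in the original references.
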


\subsection{Linear Sequences} An immediate consequence of Theorems \ref{cps} \& \ref{hps} is the fact that there are both classical and Hermite multiplier sequences which are interpolated by linear polynomials. As an example, we mention the sequence $\seq{k}$, which is a Hermite, and hence also classical multiplier sequence. There are also linear generalized Laguerre (or $L^{(\alpha)}$-) multiplier sequences for any $\alpha >-1$ (see \cite{fp}). In light of these results, it is somewhat surprising that there are no linear Legendre multiplier sequences.  

\begin{proposition}
$\seq{\gamma_k} = \seq{k+\alpha}$ is not a Legendre multiplier sequence for any $\alpha \in \R$.
\end{proposition}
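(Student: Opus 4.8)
The plan is to suppose, for contradiction, that $\{k+\alpha\}_{k=0}^\infty$ is a Legendre multiplier sequence and derive a contradiction by applying the operator to a carefully chosen polynomial with only real zeros. Since every Legendre multiplier sequence is a classical multiplier sequence (Theorem \ref{qmscms}), and classical multiplier sequences with a linear interpolant $g(k)=k+\alpha$ must have $g$ with only real non-positive zeros (this forces $\alpha \ge 0$), we may immediately restrict to $\alpha \ge 0$. If $\alpha = 0$ the sequence has a zero term followed by nonzero terms, which is forbidden by Lemma \ref{props}(i) for a non-trivial sequence (or handled by Proposition \ref{gammajunk}), so in fact $\alpha > 0$.

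Next I would exploit the low-degree structure. The cleanest test polynomials are combinations $a\,\Le_n(x) + b\,\Le_{n-2}(x)$ and, more generally, quadratic-in-$\Le$ expressions $a\,\Le_2(x)+b\,\Le_1(x)+c\,\Le_0(x)$, for which reality of zeros is a concrete discriminant condition (Theorem \ref{cd}/\ref{qd} are overkill here; a direct quadratic discriminant suffices). Writing $\Le_0 = 1$, $\Le_1 = x$, $\Le_2 = \tfrac12(3x^2-1)$, one chooses coefficients $a,b,c$ so that $p(x) = a\Le_2 + b\Le_1 + c\Le_0$ has a real double zero (discriminant zero) sitting on the boundary of the real-rootedness region, and then checks that $T_\Le[p] = (2+\alpha)a\Le_2 + (1+\alpha)b\Le_1 + \alpha c\Le_0$ is pushed strictly outside that region, i.e. its discriminant becomes negative. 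Because the map $(\gamma_0,\gamma_1,\gamma_2) = (\alpha, 1+\alpha, 2+\alpha)$ is not a scalar multiple of a constant sequence for any finite $\alpha$ — the ratios $\gamma_1/\gamma_0$ and $\gamma_2/\gamma_1$ differ — one expects this perturbation to be nondegenerate and to break reality for a suitable extremal $p$. Concretely, I would parametrize $p_t(x) = \Le_2(x) + t\Le_1(x) + s(t)\Le_0(x)$ with $s(t)$ chosen to keep a double root, compute the discriminant of $T_\Le[p_t]$ as a function of $t$, and show it is negative for some admissible $t$; alternatively, use the three-term family $f_{n,b}$ of Lemma \ref{bmax} at $n=2$ together with the forced inequality $0 < \gamma_{n-2}/\gamma_n \le 1$ from the proof of Theorem \ref{inclusion}, which for $\alpha > 0$ gives $\alpha/(2+\alpha) \le 1$ (always true, so this alone is not enough) combined with Turán $\gamma_1^2 \ge \gamma_0\gamma_2$, i.e. $(1+\alpha)^2 \ge \alpha(2+\alpha)$, which reads $1 \ge 0$ — again not enough, so the contradiction must come from an \emph{extremal} test polynomial rather than these necessary inequalities alone.

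The main obstacle, therefore, is producing the right extremal polynomial: the inherited necessary conditions (sign pattern, Turán, the $\beta^*$-inequality) are all satisfied by $\{k+\alpha\}$, so the argument cannot be purely formal and must use specific structural facts about the Legendre polynomials — most naturally the explicit connection coefficients expressing $x\Le_n(x)$ and $\Le_n''$, $\Le_n'$ in the Legendre basis, or the differential operator representation of $T_\Le$ from Theorem \ref{diffoprep}. I expect the slickest route is: determine the differential operator $R$ with $T_\Le = R$ (using that $xD$ acts almost diagonally on a generating-function-defined basis, so $T_\Le = xD + \alpha I + (\text{correction supported by the recurrence for }\Le_n)$), then exhibit a real-rooted $p$ — plausibly something like $(\Le_n + \Le_{n-1})$ or a translate of a Legendre polynomial — on which this correction term demonstrably creates a non-real pair. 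I would first nail down this operator representation, then search among degree $\le 3$ real-rooted polynomials for one where the discriminant of the image changes sign, and finally package the computation as a single explicit counterexample polynomial (depending on $\alpha$) together with a discriminant calculation, invoking Theorem \ref{cd} if a cubic is needed.
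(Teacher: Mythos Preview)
Your proposal is a plan, not a proof, and it contains both an error and a lot of unnecessary scaffolding that the paper avoids entirely.

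First, the error: your exclusion of $\alpha=0$ misreads Lemma~\ref{props}(i). That lemma forbids a \emph{nonzero} term followed by a zero term (with further nonzero terms after it), not a zero term followed by nonzero ones. The sequence $\{k\}_{k=0}^\infty = 0,1,2,\ldots$ satisfies Lemma~\ref{props}(i) vacuously and also satisfies Proposition~\ref{gammajunk} with $m=1$, so neither result rules out $\alpha=0$. Your restriction to $\alpha>0$ is therefore unjustified.

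Second, the quadratic attempt cannot succeed, and a short computation confirms this: on the boundary $b^2 = 6ac - 3a^2$ (with $a=1$, so $c\ge \tfrac12$), the discriminant of $T_\Le[p]$ works out to $6c+9+6\alpha>0$ for all $\alpha\ge 0$. So no degree-two test polynomial will ever give a contradiction in the range you have restricted to, and the time spent there is wasted.

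The paper's proof bypasses all of this. It applies the operator to the single cubic $(1+x)^3$, expands in the Legendre basis, and computes the discriminant of the resulting cubic in $x$:
\[
\Delta = -\frac{108}{125}\bigl(20\alpha^2+172\alpha+421\bigr).
\]
The quadratic in $\alpha$ has negative discriminant ($172^2-4\cdot 20\cdot 421<0$), so $\Delta<0$ for \emph{every} real $\alpha$ simultaneously---no case split on the sign or size of $\alpha$, no appeal to Lemma~\ref{props} or Proposition~\ref{gammajunk}, no differential-operator representation. Theorem~\ref{cd} then finishes the argument in one line. Your final paragraph does gesture toward ``search among degree $\le 3$ real-rooted polynomials\ldots\ invoking Theorem~\ref{cd} if a cubic is needed,'' which is exactly the right endpoint; but you never arrive there, and the detours (restricting $\alpha$, trying quadratics, proposing to compute the differential operator of $T_\Le$) are all unnecessary. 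The missing idea is simply: pick $(1+x)^3$ and compute.
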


\begin{proof}
Let $\Gamma_{\alpha}$ be the operator defined by $\bold{\Gamma}_{\alpha}[\Le_n(x)] = (n+\alpha)\Le_n(x)$ for $n=0,1,2, \ldots$, and consider the function $f(x) = (1+x)^3$ expanded in the Legendre basis: 
\[
f(x)=\frac{2}{5} \Le_3(x)+2\Le_2(x)+\frac{18}{5}\Le_1(x)+2\Le_0(x).  
\]
Applying the operator $\bf{\Gamma_{\alpha}}$ to $f(x)$ we obtain 
\begin{eqnarray*}
\bold{\Gamma}_{\alpha}[f(x)]&=&\frac{2}{5}(3+\alpha)\Le_3(x)+2(2+\alpha)\Le_2(x)+\frac{18}{5}(1+\alpha)\Le_1(x)+2\alpha\Le_0(x) \\ 
&=&\frac{1}{5}(5x^3 -3x)(\alpha+3)+(3x^2-1)(\alpha+2) + \frac{18}{5}x(\alpha +1)+2\alpha.  
\end{eqnarray*}

The discriminant of $\bold{\Gamma}_{\alpha}[f(x)]$ is given by $\Delta = -\frac{108}{125} (421+ 172\alpha +20 \alpha^2)$, which is negative for all $\alpha \in \R$.  It follows that $\bold{\Gamma}_{\alpha}[f(x)]$ has complex roots for any $\alpha \in \R$, and hence $\seq{k+\alpha}$ is not a Legendre multiplier sequence for any $\alpha \in \R$.
\end{proof}

\subsection{Quadratic Sequences}
Similarly to the linear case, Theorems \ref{cps} \& \ref{hps} guarantee the existence of quadratic classical and Hermite multiplier sequences. In this section we classify all quadratic Legendre multiplier sequences. Recall that every quadratic Legendre multiplier sequence $\seq{k^2+\alpha k+\beta}$ is a classical multiplier sequence. This fact gives us the first restrictions on the coefficients $\alpha$ and $\beta$. 

\begin{proposition}\label{cms}
$\seq{k^2+\alpha k+ \beta}$ is a classical multiplier sequence if and only if $\alpha \ge -1$ and $0 \le \beta \le \dfrac{1}{4} (\alpha +1)^2$.
\end{proposition}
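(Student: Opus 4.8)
The plan is to pass to the exponential generating function and invoke the P\'olya-Schur theorem (Theorem~\ref{PolyaSchur}) rather than argue directly with polynomials. Write $\gamma_k=g(k)$ with $g(x)=x^2+\alpha x+\beta$ and let $T$ be the operator with $T[x^k]=g(k)x^k$. The first step is the computation
\[
T[e^x]=\sum_{k=0}^{\infty}\frac{g(k)}{k!}x^k=\bigl(x^2+(\alpha+1)x+\beta\bigr)e^x,
\]
which follows from the routine identities $\sum_{k\ge0}k^2x^k/k!=(x^2+x)e^x$, $\sum_{k\ge0}kx^k/k!=xe^x$, and $\sum_{k\ge0}x^k/k!=e^x$. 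Put $Q(x):=x^2+(\alpha+1)x+\beta$. After this the proposition reduces to the chain of equivalences: $\{\gamma_k\}$ is a multiplier sequence $\iff$ $Q\in\LP^+$ $\iff$ $Q$ has two real, non-positive zeros $\iff$ $\alpha\ge-1$, $\beta\ge0$, and $(\alpha+1)^2\ge 4\beta$.

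For the ``if'' direction I would assume $\alpha\ge-1$ and $0\le\beta\le\frac{1}{4}(\alpha+1)^2$. Then the discriminant $(\alpha+1)^2-4\beta$ of $Q$ is non-negative, so its zeros are real, and since their sum $-(\alpha+1)$ is $\le0$ and their product $\beta$ is $\ge0$, both zeros are non-positive; hence $Q\in\LP^+$. Because $e^x\in\LP^+$ and $\LP^+$ is closed under multiplication, $T[e^x]=Q(x)e^x\in\LP^+$; in particular its Taylor coefficients $g(k)/k!$ are non-negative, so Theorem~\ref{PolyaSchur} applies and shows that $\{\gamma_k\}$ is a classical multiplier sequence.

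For the ``only if'' direction I would first observe that a multiplier sequence of this form has non-negative terms: $g(k)\to+\infty$ forces infinitely many positive terms, and since the terms of a classical multiplier sequence are either all of one sign or alternate in sign (the classical analogue of Lemma~\ref{props}(ii)), they cannot alternate, so $g(k)\ge0$ for all $k\ge0$. Theorem~\ref{PolyaSchur} then gives $T[e^x]=Q(x)e^x\in\LP^+$. Since $e^x$ is zero-free, the zeros of $Q(x)e^x$ are exactly those of $Q$, and a nonzero member of $\LP^+$ has only real, non-positive zeros; hence $Q$ has two real, non-positive zeros, which reads off as $(\alpha+1)^2\ge 4\beta$, $\alpha+1\ge0$, and $\beta\ge0$ — precisely the claimed conditions.

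I expect the only point needing care to be the reduction $Q(x)e^x\in\LP^+\iff Q\in\LP^+$: the forward implication rests on the fact that a nonzero function in $\LP^+$ has only real, non-positive zeros (a consequence of the Hadamard factorization of the Laguerre-P\'olya class), and the reverse on closure of $\LP^+$ under multiplication together with $e^x\in\LP^+$. The generating-function identities and the translation between ``zeros of $Q$ real and $\le0$'' and the inequalities on $\alpha,\beta$ are mechanical. It is worth noting that Tur\'an's inequality (Lemma~\ref{props}(iii)) alone does not suffice here: for $\alpha=0$, $\beta=\frac{1}{2}$ all Tur\'an inequalities hold, yet $Q=x^2+x+\frac{1}{2}$ has non-real zeros, so that sequence is not a multiplier sequence.
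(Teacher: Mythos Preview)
Your proof is correct and follows the same route as the paper: compute $T[e^x]=\bigl(x^2+(\alpha+1)x+\beta\bigr)e^x$ and invoke the P\'olya--Schur characterization (Theorem~\ref{PolyaSchur}). The paper simply asserts that this function lies in $\LP^+$ if and only if $\alpha\ge-1$ and $0\le\beta\le\frac{1}{4}(\alpha+1)^2$, whereas you spell out both implications more carefully---in particular, your preliminary step showing that a multiplier sequence of the form $\{k^2+\alpha k+\beta\}$ must have non-negative terms (so that Theorem~\ref{PolyaSchur}, stated only for non-negative sequences, actually applies) fills a small gap the paper leaves implicit.
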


\begin{proof}
By Theorem \ref{tc}, $\seq{\gamma_k}$ is a classical multiplier sequence if and only if 
\[
T[e^x] = \displaystyle\sum _{k=0}^{\infty} \ {\frac{\gamma_k}{k!} x^k} \in \LP^+.
\]
 Let $\Gamma$ denote the operator corresponding to the sequence  $\seq{k^2+\alpha k+ \beta}$. We then have
\[
\Gamma[e^x] = \displaystyle\sum _{k=0}^{\infty} \frac{k^2+ \alpha k+ \beta}{k!} x^k =[x^2+(\alpha +1)x+ \beta]e^x,
\]
which belongs to the class $\LP^+$ if and only if $\alpha \ge -1$ and $0 \le \beta \le \dfrac{1}{4} (\alpha +1)^2$.

\end{proof}

 We show that $\seq{k^2+\alpha k+\beta}$ is a Legendre multiplier sequence if and only if $\alpha = 1$ and $\beta \in [0,1]$.  We begin by first addressing the case when $\alpha \neq 1$, after which we examine the case of $\alpha = 1$. 

\begin{proposition}\label{a1}
If $\alpha \neq 1$, then $\seq{k^2+\alpha k+\beta}$ is not a Legendre multiplier sequence for any $\beta$.
\end{proposition}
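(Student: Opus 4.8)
The plan is to exhibit, for each $\alpha \neq 1$, a single polynomial $p$ with only real zeros, expressed in the Legendre basis, such that $\Gamma_{\alpha,\beta}[p]$ has a non-real zero. Following the template of the linear case, the natural first move is to test the operator on a low-degree polynomial: since a quadratic sequence acts nontrivially starting at index $2$, I would try $p(x) = (1+x)^3$ (or $(1+x)^n$ for a small $n$) expanded as $p = \sum_{k=0}^{3} a_k \Le_k(x)$, exactly as was done above. Applying $\Gamma_{\alpha,\beta}$ gives $\sum_k a_k (k^2+\alpha k+\beta)\Le_k(x)$, a cubic in $x$ whose coefficients are quadratic in $\alpha$ (and linear in $\beta$). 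By Theorem \ref{cd} (the cubic discriminant criterion), this image has a non-real zero precisely when its discriminant $\Delta(\alpha,\beta)$ is negative, so the goal reduces to showing $\Delta(\alpha,\beta) < 0$ for all $\alpha \neq 1$ and all admissible $\beta$.

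The key steps, in order, are: (1) record the Legendre expansion coefficients $a_0,\dots,a_3$ of $(1+x)^3$ (already given above as $2,\tfrac{18}{5},2,\tfrac{2}{5}$); (2) write $\Gamma_{\alpha,\beta}[(1+x)^3] = \sum_k a_k(k^2+\alpha k+\beta)\Le_k(x)$ and collect it as $Ax^3+Bx^2+Cx+D$ with $A,B,C,D$ explicit affine functions of $\alpha,\beta$; (3) compute the discriminant $\Delta = B^2C^2 - 4B^3D - 4AC^3 + 18ABCD - 27A^2D^2$; (4) analyze the sign of $\Delta$ as a polynomial in $\beta$ (it is at most quadratic in $\beta$) over the allowed range $0 \le \beta \le \tfrac14(\alpha+1)^2$ from Proposition \ref{cms}, and show it is strictly negative unless $\alpha = 1$. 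I expect that at $\alpha = 1$ the discriminant will be nonnegative (consistent with the later positive result), so the argument must genuinely use $\alpha \neq 1$; if the single test polynomial $(1+x)^3$ does not isolate $\alpha = 1$ cleanly, a fallback is to use a one-parameter family of test polynomials $\Le_n(x) + b\,\Le_{n-2}(x)$ for suitable $b$ guaranteed by Lemma \ref{bmax}, and push through a similar discriminant computation.

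The main obstacle will be step (4): verifying that the discriminant $\Delta(\alpha,\beta)$, a polynomial of modest but nontrivial degree in two variables, is negative on the whole admissible parameter region and vanishes (or turns nonnegative) only along $\alpha = 1$. This is a finite but delicate real-algebraic estimate — one must either complete squares, factor out $(\alpha-1)$ (or $(\alpha-1)^2$) explicitly, or bound $\Delta$ above by a manifestly negative expression after substituting the extreme and interior values of $\beta$. A secondary worry is that the convenient test polynomial $(1+x)^3$ might fail for some sub-range of $\alpha$ (e.g. large negative $\alpha$), in which case the proof would need to be split into cases with a different explicit $p$ in each; choosing $p$ so that the computation stays uniform in $\alpha$ is the part requiring the most care.
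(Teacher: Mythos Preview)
Your overall strategy---test on $(1+x)^n$, compute the discriminant, and show it is negative on the admissible region $A=\{(\alpha,\beta):\alpha\ge-1,\ 0\le\beta\le\tfrac14(\alpha+1)^2\}$ from Proposition~\ref{cms}---is exactly right, and matches the paper. But the specific choice $n=3$ will \emph{not} work, and this is not merely a technical worry: it fails already at $\alpha=0$, $\beta=0$. For the sequence $\{k^2\}$ one computes
\[
\Gamma_{0,0}[(1+x)^3]=9x^3+12x^2-\tfrac{9}{5}x-4,
\]
whose cubic discriminant is positive, so this image has three real zeros. (More generally, along $\alpha=0$ the discriminant of $\Gamma[(1+x)^3]$ stays nonnegative for all $\beta$ up to roughly $9.8$, far beyond the admissible range.) Thus no amount of factoring out $(\alpha-1)$ from the discriminant of the cubic image will succeed; the test polynomial itself is too weak. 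Your claim that $\Delta$ is ``at most quadratic in $\beta$'' is also off: each coefficient of the image is linear in $\beta$ and the cubic discriminant is degree~$4$ in those coefficients.

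The paper instead takes $p=(1+x)^4$ and works with the \emph{quartic} discriminant $\Delta_x f(\alpha,\beta,x)$. The resulting expression is too complicated to handle uniformly, so the admissible region $A$ is foliated by one-parameter families of lines through the point $(1,0)$ (and through the origin), reducing the two-variable sign problem to a one-variable monotonicity argument: one shows $\partial_r\Delta_x f$ has no real zeros in $r$ (by checking \emph{its} discriminant in $r$), pins down its sign at a convenient $r$, and then evaluates $\Delta_x f$ at a boundary value of $r$ to conclude it is negative throughout. A separate short argument, using both $(1+x)^3$ and $(1+x)^4$, handles the line $\alpha=0$. So your secondary worry was the real one: a single test polynomial does essentially suffice, but it must be $(1+x)^4$, and the sign analysis of the discriminant requires a genuine case split rather than a direct factorization.
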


\begin{proof}
Let $\Gamma=\seq{k^2+\alpha k+\beta}$. Let $\alpha \geq -1$ and $\alpha \neq 1$.  Define 
\begin{eqnarray*}
f(\alpha,\beta,x) &=& \Gamma[(1+x)^4] \\
&=&\frac{4}{5}(5x^3-3x)(3 \alpha + \beta +9)+\frac{16}{7}(3x^2-1)(2 \alpha + \beta +4)\\
&&+\frac{1}{35}(35x^4-30x^2+3)(4 \alpha + \beta +16)+\frac{32}{5} x (\alpha + \beta +1) +\frac{16}{5} \beta,
\end{eqnarray*}
and denote the discriminant of $f$ with respect to $x$ by $\Delta_x f(\alpha, \beta,x)$ .  As a consequence of Theorem \ref{qmscms} and Proposition \ref{cms}, we need only consider $\alpha$ and $\beta$ contained in the set 
\[
A = \left\{(\alpha,\beta) \ \Big| \ \alpha \ge -1, 0 \le \beta \le \frac{(\alpha + 1)^2}{4} \right \},
\] which is shown in Figure $1$. Case $1$ examines the region shaded in Figure $2$, and Case $2$ examines the region in Figure $3$. Figure $4$ demonstrates that these cases taken together establish that $\seq{k^2+\alpha k+\beta}$ is not a Legendre multiplier sequence for any $(\alpha, \beta) \in A$ with $\alpha \neq 1$.\\

\medskip
\begin{tabular}{cc}
Figure 1&Figure 2\\
\includegraphics[scale=.52]{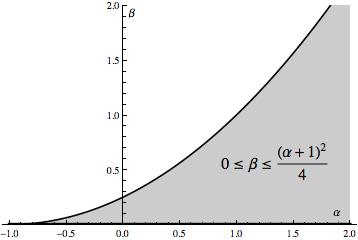}&\includegraphics[scale=.52]{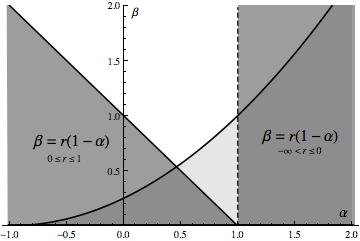}\\
Figure 3&Figure 4\\
\includegraphics[scale=.52]{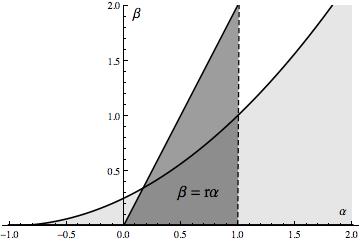}&\includegraphics[scale=.52]{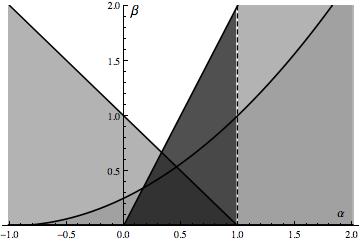} \\
\end{tabular}

\begin{itemize}
\item[\underline{Case 1:}]Let $\beta = r(1-\alpha)$ with $ r\leq 1$. As $r$ ranges through the indicated values, these lines cover the shaded area in Figure 2.\\
Given the restrictions on $\alpha$, a calculation shows that
\begin{eqnarray*}\label{final}
\Delta_r\bigg[\frac{\partial}{\partial r}\Delta_x f\Big(\alpha, r(1-\alpha), x\Big)\bigg] < 0,
\end{eqnarray*}
where $\Delta_r$ and $\Delta_x$ denote the discriminants with respect to $r$ and $x$.
It follows that the function $\ds{\frac{\partial}{\partial r} \Delta_xf\Big(\alpha, r(1-\alpha), x\Big)}$, which is quadratic in $r$, has no real zeros. We check that 
\[
\frac{\partial}{\partial r} \Delta_x f\Big(\alpha, r(1-\alpha), x\Big) \Big|_{r=0} >0
\]
and conclude that $\ds{\frac{\partial}{\partial r} \Delta_xf\Big(\alpha, r(1-\alpha), x\Big)}$ is everywhere positive. Consequently \\ $\ds{\Delta_x f\Big(\alpha, r(1-\alpha), x\Big)}$ is monotone increasing in $r$, which, together with the fact that \\$\Delta_x f\Big(\alpha, r(1-\alpha), x\Big) \Big|_{r=1} < 0$, implies that $\Delta_x f\Big(\alpha, r(1-\alpha), x\Big) < 0$ for any $r \leq 1$.  Therefore $f\Big(\alpha, r(1-\alpha),x\Big)$ has complex zeros for any $r \leq 1$ and $\alpha \geq -1, \alpha \neq 1$.

\item[\underline{Case 2:}]Let $\beta = r\alpha$ with $r \geq 0$ and $0 < \alpha < 1$. As $r$ ranges through the indicated values, these lines cover the shaded area in Figure 3.\\
A calculation shows that,
\begin{eqnarray*}
\Delta_r\bigg[\frac{\partial}{\partial r} \Delta_xf\Big(\alpha, r \alpha, x\Big)\bigg] < 0 \qquad (0<\alpha<1).
\end{eqnarray*}
It follows that the function $\ds{\frac{\partial}{\partial r} \Delta_xf\Big(\alpha, r\alpha, x\Big)}$, which is quadratic in $r$, has no real zeros.  We check that when $0 < \alpha < 1$,
\[
\frac{\partial}{\partial r} \Delta_x f\Big(\alpha, r\alpha, x\Big) \Big|_{r=0} >0
\]
and conclude that $\ds{\frac{\partial}{\partial r} \Delta_xf\Big(\alpha, r\alpha, x\Big)}$ is positive for all $r \in \R$.
 It follows that $\ds{\Delta_x f\Big(\alpha, r\alpha, x\Big)}$ is monotone increasing, which, together with the fact that $\Delta_x f\Big(\alpha, r\alpha, x\Big) \Big|_{r=2} < 0$ implies that $\Delta_x f\Big(\alpha, r\alpha, x\Big) < 0$ for any $0 \leq r \leq 2$.  Therefore $f\Big(\alpha, r\alpha,x\Big)$ has complex zeros for any $0 \leq r \leq 2$ and $0 < \alpha < 1$.
 \item[\underline{Case 3}] Let $\alpha=0$ and let $\Gamma$ be the operator corresponding to the sequence $\seq{k^2+\beta}$. The discriminant of $\Gamma[(1+x)^3]$ is negative when $\beta > 9.8149$. On the other hand, the discriminant of $\Gamma[(1+x)^4]$ is negative when $\beta < 11.7649$. These facts and Theorems \ref{cd} \& \ref{qd} imply that  $\seq{k^2+\beta}$ is not  a Legendre multiplier sequence.
\end{itemize}
It follows that when $\alpha \neq 1$, $\seq{k^2+\alpha k+\beta}$ is not a Legendre multiplier sequence for any $\beta$.
\end{proof}

We now consider the case when $\alpha =1$.  From Proposition \ref{cms} we know that if $\seq{k^2+ k+\beta}$ is a classical multiplier sequence, then $\beta \in [0,1]$.  It remains to show that if $\beta \in [0,1]$, then $\seq{k^2+ k+\beta}$ is a Legendre multiplier sequence. We first treat the cases when $\beta=0$ and $\beta =1$. Finally we deal with the case when $\beta \in (0,1)$.

\begin{lemma}\label{k2k}
$\seq{k^2+k}$ is a Legendre multiplier sequence.
\end{lemma}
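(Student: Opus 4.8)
The key observation is that the sequence $\gamma_k = k^2+k = k(k+1)$ is precisely the eigenvalue sequence of the Legendre differential operator. Recall from \cite{rainville} that the Legendre polynomials satisfy
\[
(1-x^2)\Le_n''(x) - 2x\,\Le_n'(x) + n(n+1)\Le_n(x) = 0,
\]
so the second-order differential operator $L := -(1-x^2)D^2 + 2xD = -D\big((1-x^2)D\big)$ acts on the Legendre basis by $L[\Le_n(x)] = n(n+1)\Le_n(x)$. Thus the operator $T_{\Le}$ associated to $\seq{k^2+k}$ is nothing other than $L$ itself. The plan is therefore to show directly that $L = -D\big((1-x^2)D\big)$ preserves reality of zeros on $\mathbb{R}[x]$.

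First I would reduce to a statement about $D\big((1-x^2)D\big)$ and handle it via the factored form. Let $p \in \mathbb{R}[x]$ have only real zeros; write $q(x) := (1-x^2)p'(x)$. By Rolle's theorem $p'$ has only real zeros, and multiplying by $(1-x^2) = (1-x)(1+x)$ only adjoins the two real zeros $\pm 1$, so $q$ has only real zeros. Differentiating once more and invoking Rolle's theorem again shows $q'(x) = D\big((1-x^2)p'(x)\big)$ has only real zeros, hence so does $L[p] = -q'$. The only subtlety is that Rolle's theorem in this naive form gives reality of zeros only when the intermediate polynomials are not identically zero and when one tracks multiplicities correctly across the factor $(1-x^2)$; I would phrase the argument using the standard fact (a consequence of Rolle) that if $f$ has only real zeros then so does $f'$, applied to $p$ and then to $q$, with a brief remark that constants and the degree-drop cases are trivial.

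An alternative, should one prefer to stay within the framework already set up in the paper, is to apply the Borcea--Br\"and\'en criterion (Theorem \ref{3exw}): one computes $T_{\Le}[e^{xw}] = L_x[e^{xw}]$ where $L_x$ denotes $L$ acting in the $x$ variable, obtaining an explicit entire function in $(x,w)$, and verifies it lies in $\overline{A}$. However, I expect the direct Rolle-type argument via the factorization $L = -D\circ((1-x^2)\,\cdot\,)\circ D$ to be both shorter and more transparent, so that is the route I would take. The main (and only real) obstacle is the bookkeeping around the endpoints $\pm 1$ and the possible cancellation of leading terms: one must confirm that $\deg L[p]$ behaves well and that no spurious loss of real zeros occurs — but since multiplication by $(1-x^2)$ strictly increases the real-zero count and each differentiation decreases it by exactly one (counting multiplicity, by Rolle), the count works out and $L[p]$ has only real zeros whenever $p$ does.
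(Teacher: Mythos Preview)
Your proof is correct and is essentially the same as the paper's: both identify the operator with the Legendre differential operator and use the factorization $L = D\circ\big((x^2-1)\,\cdot\,\big)\circ D$ (the paper writes this as $[(x^2-1)D+2x]D$ and then observes $(x^2-1)D+2x = D\circ((x^2-1)\,\cdot\,)$), concluding via Rolle's theorem applied twice with an intermediate multiplication by $(x^2-1)$. The bookkeeping concerns you raise about degree drops and endpoints are non-issues here, and the paper does not dwell on them either.
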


\begin{proof}
The Legendre polynomials satisfy Legendre's differential equation:
\begin{eqnarray}\label{lde}
(1-x^2) \Le_n''(x) -2x\Le_n'(x) + n(n+1) \Le_n(x) =0.
\end{eqnarray}
It follows that the linear operator $\Gamma$ defined by $\Gamma[\Le_n(x)]=n(n+1) \Le_n(x)$ has the differential operator representation $T= [(x^2-1)D +2x]D$.  We proceed to show that $T$ preserves the reality of zeros.  To this end, consider $(x^2-1)f(x)$, where $f(x)$ has only real zeros.  We know that differentiation preserves reality of zeros.  Therefore,
\begin{eqnarray*}
D[(x^2-1)f(x)]&=&2x f(x) +(x^2-1)f'(x)\\
&=&[(x^2-1)D+2x]f(x)
\end{eqnarray*}
has only real zeros.  It follows that the operator $(x^2-1)D+2x$ preserves the reality of zeros. The result follows.
\end{proof}

\begin{lemma}\label{k2k1}
$\seq{k^2+k+1}$ is a Legendre multiplier sequence.
\end{lemma}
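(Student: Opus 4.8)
The plan is to realize $\seq{k^2+k+1}$ as a differential operator and appeal to the Borcea--Br\"and\'en theorem. From Legendre's equation (\ref{lde}) we have $(x^2-1)\Le_n''(x)+2x\Le_n'(x)=n(n+1)\Le_n(x)$, so the operator $T_1:=(x^2-1)D^2+2xD+I$ satisfies $T_1[\Le_n(x)]=(n(n+1)+1)\Le_n(x)=(n^2+n+1)\Le_n(x)$; by Theorem \ref{diffoprep} this is the unique differential operator representation of $\seq{k^2+k+1}$ (equivalently $T_1=T+I$ with $T$ the operator from Lemma \ref{k2k}). Hence it suffices to prove that $T_1$ preserves the reality of zeros, and for this I would verify part (2) of Theorem \ref{3exw}: that $T_1[e^{-xw}]\in\overline{A}$.

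The next step is a direct computation. Since $D^k e^{-xw}=(-w)^k e^{-xw}$,
\[
T_1[e^{-xw}]=\big[(x^2-1)w^2-2xw+1\big]e^{-xw},
\]
and because $(x^2-1)w^2-2xw+1=(wx-1)^2-w^2$, this factors as
\[
T_1[e^{-xw}]=(wx-w-1)(wx+w-1)\,e^{-xw}.
\]
So the goal reduces to showing that the product $(wx-w-1)(wx+w-1)e^{-xw}$ lies in $\overline{A}$.

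The key point is that $\overline{A}$ is closed under multiplication: if $f,g\in A$ then $fg$ is non-vanishing on $\{\operatorname{Im}x>0,\ \operatorname{Im}w>0\}$, hence $fg\in A$, and this is inherited by locally uniform limits. So it is enough to check the three factors separately. For $wx-w-1=w(x-1)-1$: if it vanished at a point with $\operatorname{Im}x>0$, then $w=1/(x-1)$; but $x-1$ lies in the open upper half-plane and $z\mapsto 1/z$ maps the upper half-plane into the lower half-plane, so $\operatorname{Im}w<0$. Thus $wx-w-1\in A$, and the same argument with $x+1$ in place of $x-1$ gives $wx+w-1\in A$. Finally, $e^{-xw}=\lim_{n\to\infty}(1-xw/n)^n$ locally uniformly, and $(1-xw/n)^n$ vanishes only where $xw$ is a nonnegative real; but if $xw=t\ge 0$ with $\operatorname{Im}x>0$, then $w=t/x$ has $\operatorname{Im}w\le 0$, so $(1-xw/n)^n\in A$ and $e^{-xw}\in\overline{A}$. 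Combining these, $T_1[e^{-xw}]\in\overline{A}$, and Theorem \ref{3exw} gives that $T_1$ preserves reality of zeros, so $\seq{k^2+k+1}$ is a Legendre multiplier sequence.

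I expect the only subtle point to be the choice of the ``$e^{-xw}$'' alternative rather than the ``$e^{xw}$'' one in Theorem \ref{3exw}: one computes $T_1[e^{xw}]=(wx+1-w)(wx+1+w)e^{xw}$, and the factor $wx+1-w=w(x-1)+1$ \emph{does} vanish for some $w$ in the upper half-plane when $\operatorname{Im}x>0$, so the clean factor-by-factor argument is unavailable on that side. Apart from picking the right side, the remaining work — the elementary two-variable factorization and the half-plane bookkeeping — is routine.
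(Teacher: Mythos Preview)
Your proof is correct and follows essentially the same route as the paper: identify the differential operator $(x^2-1)D^2+2xD+1$ from Legendre's equation and check the Borcea--Br\"and\'en criterion by locating the zeros of the resulting two-variable symbol. The only differences are cosmetic---the paper works with $T[e^{xw}]$ and solves the quadratic in $w$ directly (obtaining $w_{1,2}=1/(x\pm1)$ and checking $\operatorname{Im}w_{1,2}<0$), whereas you work with $T_1[e^{-xw}]$, factor the symbol as $(wx-w-1)(wx+w-1)$, handle each linear factor separately, and add the extra care of showing $e^{-xw}\in\overline A$ via the approximation $(1-xw/n)^n$ together with closure of $\overline A$ under products; in fact the roots $1/(x\pm1)$ that the paper writes down are exactly the $w$-roots of your $e^{-xw}$ symbol $(x^2-1)w^2-2xw+1$, so the two computations coincide up to the sign of $w$.
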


\begin{proof}
The differential operator associated to $\seq{k^2+k+1}$ is $T=(x^2-1)D^2+2xD+1$.  We show that $T$ preserves the reality of zeros.  By Theorem \ref{3exw} it suffices to show that $T[e^{xw}]$ has no zeros in the region $\Omega := \{(x,w)$ $ \vline $ Im $(x) >0$, Im $(w)>0\}$.  We evaluate
\begin{eqnarray*}
T[e^{xw}]&=&[(x^2-1)D^2+2xD+1][e^{xw}]\\
&=&e^{xw}((xw)^2-w^2+2xw+1).
\end{eqnarray*}
Since $e^{xw}$ is nowhere zero, the zeros of $T[e^{xw}]$ are those of $(xw)^2-w^2+2xw+1$.  Solving
\begin{eqnarray*}
(xw)^2-w^2+2xw+1=0
\end{eqnarray*}
for $w$ we obtain
\begin{eqnarray*}
w_{1,2}=\frac{1}{x \pm 1}= \frac{\bar{x} \pm 1}{| x \pm 1|^2}.
\end{eqnarray*}
It follows that if Im $(x) >0$, then  Im $(w_{1,2}) < 0$.  Therefore $T[e^{xw}]$ has no zeros in the region $\Omega$, and we conclude that $T$ preserves the reality of zeros.  Thus $\seq{k^2+k+1}$ is a Legendre multiplier sequence.
\end{proof}

 \begin{proposition}\label{01}
 If $0<\beta<1$, then the operator $T=\beta+2xD+(x^2-1)D^2$ preserves the reality of zeros. 
 \end{proposition}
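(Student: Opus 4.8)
The plan is to apply the Borcea--Br\"and\'en criterion (Theorem~\ref{3exw}), in the spirit of the proof of Lemma~\ref{k2k1}. Since $T$ maps $\mathbb{R}[x]$ onto all of $\mathbb{R}[x]$, it is not of the finite-rank form in part~(1) of that theorem, so it suffices to place one of the entire functions $T[e^{\pm xw}]$ in $\overline{A}$. A short computation gives
\[
T[e^{xw}]=\big[(x^2-1)w^2+2xw+\beta\big]e^{xw},\qquad T[e^{-xw}]=\big[(x^2-1)w^2-2xw+\beta\big]e^{-xw}.
\]
The factor attached to $T[e^{xw}]$ vanishes on $\Omega=\{(x,w):\mathrm{Im}(x)>0,\ \mathrm{Im}(w)>0\}$ (its roots in $x$ are $(-1\pm\zeta)/w$ with $\zeta^{2}=w^{2}+1-\beta$, and their imaginary parts sum to $-2\,\mathrm{Im}(1/w)>0$, so they cannot both be $\le 0$), so instead I would work with $g(x,w):=(x^2-1)w^2-2xw+\beta$ and show it has no zero on $\Omega$. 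Given this, $g\,e^{-xw}$ is a locally uniform limit of real polynomials with no zeros on $\Omega$---write $e^{-xw}=\lim_{n}(1-xw/n)^{n}$ and note that $xw\notin[0,\infty)$ whenever $\mathrm{Im}(x),\mathrm{Im}(w)>0$---hence lies in $\overline{A}$, and part~(2) of Theorem~\ref{3exw} completes the proof.

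To show $g$ has no zero on $\Omega$, fix $w$ with $\mathrm{Im}(w)>0$ (so $w\neq0$) and solve $g(x,w)=0$ for $x$: the two roots are $x_{\pm}=(1\pm\zeta)/w$, where $\zeta$ is a square root of $w^{2}+c$ and $c:=1-\beta\in(0,1)$. Since $\mathrm{Im}(1/w)=-\mathrm{Im}(w)/|w|^{2}<0$, we get $\mathrm{Im}(x_{\pm})\le0$ for both signs as soon as
\[
\big|\mathrm{Im}(\zeta/w)\big|\ \le\ \big|\mathrm{Im}(1/w)\big|.
\]
Applying the identity $|\mathrm{Im}\,u|^{2}=\tfrac12\big(|u|^{2}-\mathrm{Re}(u^{2})\big)$ with $u=\zeta/w$ and with $u=1/w$, and setting $z:=c/w^{2}$ (so $(\zeta/w)^{2}=1+z$ and $(1/w)^{2}=z/c$), this reduces to the algebraic inequality
\[
c\,|1+z|\ \le\ |z|+c-\beta\,\mathrm{Re}(z).
\]

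It remains to verify this inequality, which I expect to be the only substantive computation. Its right-hand side is $\ge c|z|+c>0$, so one may square; writing $r=|z|$, $p=\mathrm{Re}(z)$ (so $r\ge|p|$) and using $\beta+c=1$, the difference of the two squares simplifies to
\[
\beta\big[(2-\beta)r^{2}+\beta p^{2}-2rp\big]+2c\,(r-p).
\]
If $p\le0$, each summand is nonnegative. If $0<p\le r$, set $t=p/r\in(0,1]$; the expression becomes $\beta r^{2}h(t)+2c\,r(1-t)$ with $h(t)=\beta t^{2}-2t+(2-\beta)$, and since $h(1)=0$ while the vertex of $h$ is at $t=1/\beta>1$, $h$ is nonnegative on $[0,1]$. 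So the displayed quantity is $\ge0$ in every case, the inequality holds, $g$ has no zero on $\Omega$, and the proof is complete.

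The main obstacle I anticipate is this last chain of reductions: one must first recognize that the obvious choice $T[e^{xw}]$ does \emph{not} lie in $\overline{A}$ and pass to $T[e^{-xw}]$, and then carry the square roots cleanly through the imaginary-part estimate down to $c|1+z|\le|z|+c-\beta\,\mathrm{Re}(z)$. That inequality is sharp---an equality precisely when $z$ is a nonnegative real---so the constraint $r\ge|p|$ is essential and the final estimate must be organized with some care (here, by the sign of $p$).
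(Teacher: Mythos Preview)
Your proof is correct. The overall strategy is the same as the paper's: invoke the Borcea--Br\"and\'en criterion, reduce to showing that the polynomial $g(x,w)=(x^2-1)w^2-2xw+\beta$ has no zero with $\mathrm{Im}(x)>0$ and $\mathrm{Im}(w)>0$, and write the two $x$-roots as $(1\pm\zeta)/w$ with $\zeta^2=w^2+(1-\beta)$. Where you diverge is in the verification that both roots have $\mathrm{Im}(x_\pm)\le0$. The paper argues geometrically, splitting into the cases $w\in i\mathbb{R}_{>0}$, $0<\mathrm{Arg}(w)<\pi/2$, and $\pi/2<\mathrm{Arg}(w)<\pi$, and in each case tracking the arguments of $1\pm\sqrt{(1-\beta)+w^2}$ relative to that of $w$ through a chain of inequalities. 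You instead pass through the identity $|\mathrm{Im}\,u|^{2}=\tfrac12(|u|^{2}-\mathrm{Re}(u^{2}))$ to reduce everything to the single real inequality $c\,|1+z|\le |z|+c-\beta\,\mathrm{Re}(z)$ in $z=c/w^{2}$, and then dispose of that by squaring. Your route is cleaner: it avoids branch-of-square-root bookkeeping and the three-way case split, and the final estimate is a short computation; the paper's route is more visual but correspondingly more delicate. One further point worth noting is that you explicitly observe (correctly) that $T[e^{xw}]$ does \emph{not} lie in $\overline{A}$ here, so part~(3) of Theorem~\ref{3exw} is unavailable and one must use part~(2); the paper simply writes down the polynomial with the minus sign without flagging this.
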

 
 \begin{proof} 
 According to Theorem \ref{3exw}, the operator $T$ preserves the reality of zeros as long as the polynomial
 \[
 f(z,w)=(z^2-1)w^2-2zw+\beta=w^2z^2-2wz+\beta-w^2
 \]
 does not vanish whenever Im$(w)>0$ and Im$(z)>0$. Solving $f(z,w)=0$ for $z$ we obtain
  \footnote{We take $\sqrt{(1-\beta)+w^2}$ to be the complex number with the imaginary part of the same sign as that of $w$}
 \[
 z_{1,2}=\frac{2w \pm \sqrt{4w^2-4(w^2)(\beta-w^2)}}{2w^2}=\frac{1\pm\sqrt{(1-\beta)+w^2}}{w}.
 \]
 Suppose first that $w=ki$, where $k>0$. In this case $(1-\beta)+w^2$ is a real number. If this number is positive, then $1\pm\sqrt{(1-\beta)+w^2}$ is real, and hence $z_{1,2}=-i k_{1,2}$. If $(1-\beta)+w^2 <0$, then $\ds{z_{1,2}=-i k\pm\frac{\tilde{k}}{k}}$ for some $\tilde{k} \in \R$, and hence Im$(z_{1,2})<0$.  \\
 We break the rest of the proof into two cases.
 \begin{itemize}
\item[\underline{Case 1:}] {\bf $0<$Arg$(w)<\pi/2$}. Since $0<\beta<1$, we have
 \begin{equation}\label{part1}
\frac{\pi}{2}> \textrm{Arg}(w)>\textrm{Arg}\left(\sqrt{(1-\beta)+w^2}\right)>\textrm{Arg}\left(\sqrt{w^2+1}\right)>0,
 \end{equation}
 and similarly, 
 \begin{equation}\label{part2}
 -\pi <\textrm{Arg}\left(-\sqrt{w^2+1}\right) < \textrm{Arg}\left(-\sqrt{(1-\beta)+w^2}\right)<\textrm{Arg}(-w)<-\frac{\pi}{2}.
 \end{equation}
Equation (\ref{part1}) immediately implies that 
\[
(\dag) \quad -\frac{\pi}{2} < \textrm{Arg}\left(\frac{1+\sqrt{(1-\beta)+w^2}}{w} \right)<0.
\] 
Using equation (\ref{part2}) we deduce that  
 \[
 \textrm{Arg}\left(1-\sqrt{w^2+1}\right) < \textrm{Arg}\left(1-\sqrt{(1-\beta)+w^2}\right)<0, 
 \]
 and consequently,
 \[
 \textrm{Arg}\left(\frac{1-\sqrt{w^2+1}}{w}\right) < \textrm{Arg}\left(\frac{1-\sqrt{(1-\beta)+w^2}}{w}\right)<0. 
 \]
 The identity $\ds{ \frac{1-\sqrt{1+w^2}}{w}=-\frac{w}{1+\sqrt{w^2+1}}}$ together with the fact that \\Arg$(w)>$Arg$(1+\sqrt{1+w^2})$ implies that 
 \[
 (\ddag) \quad -\pi<\textrm{Arg}\left(\frac{1-\sqrt{(1-\beta)+w^2}}{w}\right)<0.
  \]
  Equations $(\dag)$ and $(\ddag)$ together establish that Im$(z_{1,2})<0$.

  \item[\underline{Case 2:}] {\bf $\pi/2<$Arg$(w)<\pi$}.  Since $0<\beta<1$, we have
 \begin{equation}\label{part3}
\pi> \textrm{Arg}\left(\sqrt{w^2+1}\right) > \textrm{Arg}\left(\sqrt{(1-\beta)+w^2}\right) > \textrm{Arg}(w)>\frac{\pi}{2},
 \end{equation}
 and similarly, 
 \begin{equation}\label{part4}
 -\frac{\pi}{2} <\textrm{Arg}(-w)< \textrm{Arg}\left(-\sqrt{(1-\beta)+w^2}\right)<\textrm{Arg}\left(-\sqrt{w^2+1}\right) <0.
 \end{equation}
 Equation (\ref{part3}) implies that 
 \[
  \textrm{Arg}\left(\frac{1+\sqrt{w^2+1}}{w}\right) > \textrm{Arg}\left(\frac{1+\sqrt{(1-\beta)+w^2}}{w}\right)> \textrm{Arg}\left(\frac{1+w}{w}\right). 
\]
We see that 
\[
\textrm{Arg}\left(\frac{1+w}{w}\right)>\textrm{Arg}\left(\frac{1}{w}\right)> -\pi.
\]
The identity $\ds{ \frac{1+\sqrt{1+w^2}}{w}=-\frac{w}{1-\sqrt{w^2+1}}}$ together with the fact that \\Arg$(-w)<$Arg$(1-\sqrt{1+w^2})$ implies that 
 \[
 (*) \quad -\pi<\textrm{Arg}\left(\frac{1+\sqrt{(1-\beta)+w^2}}{w}\right)<0.
  \]
  Using equation (\ref{part4}) we deduce that  
 \[
 \textrm{Arg}\left(1-w\right) < \textrm{Arg}\left(1-\sqrt{(1-\beta)+w^2}\right)<0, 
 \]
 and consequently,
 \[
 \textrm{Arg}\left(\frac{1-w}{w}\right) < \textrm{Arg}\left(\frac{1-\sqrt{(1-\beta)+w^2}}{w}\right)<0. 
 \]
Note that
 \[
 -\pi= \textrm{Arg}\left(\frac{-w}{w}\right)<\textrm{Arg}\left(\frac{1-w}{w}\right),
 \]
 which implies that
 \[
 (**) \quad -\pi<\textrm{Arg}\left(\frac{1-\sqrt{(1-\beta)+w^2}}{w}\right)<0.
 \]
  Equations $(*)$ and $(**)$ together establish that Im$(z_{1,2})<0$.

  \end{itemize}
 \end{proof}

\noindent We have thus proved the following.

\begin{proposition}\label{a=1}
If $\alpha =1$ and $\beta \in [0,1]$, then $\seq{k^2+\alpha k+\beta}$ is a Legendre multiplier sequence.
\end{proposition}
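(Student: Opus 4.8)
The plan is to exhibit the differential operator representation of the sequence $\seq{k^2+k+\beta}$ and then read off the conclusion from the three results just established. Recall from Legendre's differential equation (\ref{lde}) that, after rearranging, $[(x^2-1)D^2+2xD]\Le_n(x)=n(n+1)\Le_n(x)$. Adding $\beta\,\Le_n(x)$ to both sides shows that the operator $T=(x^2-1)D^2+2xD+\beta I$ satisfies $T[\Le_n(x)]=(n^2+n+\beta)\Le_n(x)$ for every $n$. By the uniqueness asserted in Theorem \ref{diffoprep}, $T$ is exactly the operator $T_\Le$ attached to the sequence $\seq{k^2+k+\beta}$, which is of the form $\seq{k^2+\alpha k+\beta}$ with $\alpha=1$. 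Hence $\seq{k^2+k+\beta}$ is a Legendre multiplier sequence precisely when $T$ preserves the reality of zeros.

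Next I would split on the value of $\beta$. The case $\beta=0$ is Lemma \ref{k2k}; the case $\beta=1$ is Lemma \ref{k2k1}; and for $0<\beta<1$, Proposition \ref{01} states exactly that $T=\beta+2xD+(x^2-1)D^2$ preserves the reality of zeros. Combining the three cases, $T$ preserves the reality of zeros for every $\beta\in[0,1]$, and therefore $\seq{k^2+k+\beta}$ is a Legendre multiplier sequence for all $\beta\in[0,1]$, which is the assertion (with $\alpha=1$).

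There is no substantial obstacle here: the analytic content has already been supplied by Lemmas \ref{k2k} and \ref{k2k1} and by Proposition \ref{01}. The only points needing care are the identification of $(x^2-1)D^2+2xD+\beta I$ as the operator associated to $\seq{k^2+k+\beta}$ via Legendre's ODE, and the standard observation that a sequence is a Legendre multiplier sequence exactly when its associated operator (the one acting diagonally on the Legendre basis) preserves the reality of zeros. I would make both explicit and then simply assemble the three cases into a two-line argument.
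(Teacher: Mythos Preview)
Your proposal is correct and follows essentially the same approach as the paper: the paper simply states ``We have thus proved the following'' immediately after Lemma~\ref{k2k}, Lemma~\ref{k2k1}, and Proposition~\ref{01}, treating Proposition~\ref{a=1} as the assembly of those three cases. Your write-up merely makes explicit the identification of $T=(x^2-1)D^2+2xD+\beta I$ with the diagonal operator for $\seq{k^2+k+\beta}$ via Legendre's ODE, which the paper leaves implicit.
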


Propositions \ref{a1} and \ref{a=1} together characterize all quadratic Legendre multiplier sequences.

\begin{theorem}\label{quadkill}
$\seq{k^2+\alpha k+\beta}$ is a Legendre multiplier sequence if and only if $\alpha = 1$ and $\beta \in [0,1]$.
\end{theorem}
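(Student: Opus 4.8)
The plan is to obtain Theorem \ref{quadkill} by assembling results already in hand, splitting into the ``if'' and ``only if'' directions. The ``if'' direction is nothing more than Proposition \ref{a=1}: when $\alpha = 1$ and $\beta \in [0,1]$, the sequence $\seq{k^2+\alpha k+\beta}$ is a Legendre multiplier sequence. So the only thing that needs spelling out is necessity, namely that a quadratic Legendre multiplier sequence $\seq{k^2+\alpha k+\beta}$ must have $\alpha = 1$ and $\beta \in [0,1]$.

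For necessity I would argue as follows. Suppose $\seq{k^2+\alpha k+\beta}$ is a Legendre multiplier sequence. Since the Legendre polynomials form a simple set, Theorem \ref{qmscms} shows that it is also a classical multiplier sequence, so Proposition \ref{cms} forces $\alpha \ge -1$ and $0 \le \beta \le \tfrac14(\alpha+1)^2$. In particular $(\alpha,\beta)$ lies in the region $A$ to which Proposition \ref{a1} applies, and that proposition shows $\seq{k^2+\alpha k+\beta}$ fails to be a Legendre multiplier sequence whenever $\alpha \neq 1$. Hence $\alpha = 1$, and substituting $\alpha = 1$ into the bound from Proposition \ref{cms} gives $0 \le \beta \le \tfrac14(1+1)^2 = 1$, i.e. $\beta \in [0,1]$. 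Combining the two directions yields the stated characterization.

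It is worth noting that all of the genuine difficulty has already been absorbed into the two input propositions, so the assembly step above is routine; if one were proving the theorem from scratch, Propositions \ref{a1} and \ref{a=1} would be the two obstacles. Proposition \ref{a1} requires showing that for every $(\alpha,\beta) \in A$ with $\alpha \neq 1$ the polynomial $\Gamma[(1+x)^4]$ has a non-real zero, and I would expect the main work there to be organizing the two-parameter discriminant into tractable one-parameter slices ($\beta = r(1-\alpha)$ and $\beta = r\alpha$), establishing negativity of $\Delta_x f$ on an endpoint together with a monotonicity argument via $\partial_r \Delta_x f$, plus a separate treatment of the boundary line $\alpha = 0$ using the cubic and quartic discriminant criteria of Theorems \ref{cd} and \ref{qd}. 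Proposition \ref{a=1} instead rests on the differential operator representation $T = (x^2-1)D^2 + 2xD + \beta$ coming from Legendre's differential equation together with the Borcea--Br\"and\'en criterion (Theorem \ref{3exw}): one computes $T[e^{zw}] = e^{zw}\bigl((z^2-1)w^2 + 2zw + \beta\bigr)$ and must verify the quadratic factor has no zero with $\mathrm{Im}(z) > 0$ and $\mathrm{Im}(w) > 0$, the delicate point being the argument bookkeeping for $\sqrt{(1-\beta)+w^2}$ as $w$ ranges over the upper half-plane, handled by the two quadrant cases in Proposition \ref{01}.
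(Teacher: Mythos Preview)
Your proposal is correct and matches the paper's approach exactly: the theorem is stated as an immediate consequence of Propositions~\ref{a1} and~\ref{a=1}, with the necessity direction flowing through Theorem~\ref{qmscms} and Proposition~\ref{cms} to force $\alpha=1$ and then $\beta\in[0,1]$. Your additional commentary on how the two input propositions are proved accurately summarizes the paper's arguments (the discriminant slicing for Proposition~\ref{a1} and the Borcea--Br\"and\'en verification for Proposition~\ref{01}/\ref{a=1}).
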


\subsection{Geometric Sequences}

We now look at the geometric sequences $\{r^k\}_{k=0}^{\infty}$, where $r \in \R \setminus \set{0}$.  It is known that sequences of this form are classical multiplier sequences, but are Hermite multiplier sequences if and only if $|r| \ge 1$.  It has also been shown by Forg\'{a}cs and Piotrowski \cite{fp} that the only such Laguerre multiplier sequence is the constant sequence $\{1\}_{k=0}^{\infty}$.

\begin{theorem} \label{geom}
If $r \neq 0$, then $\{r^k\}_{k=0}^{\infty}$ is a Legendre multiplier sequence if and only if $|r|=1$.
\end{theorem}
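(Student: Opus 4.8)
The plan is to prove the two implications separately: the reverse implication is short, while the forward one carries the bulk of the work. For sufficiency, note that $\seq{1^k}$ is the constant sequence, hence a trivial Legendre multiplier sequence, and that for $r=-1$ Rodrigues' formula (\ref{rod}) gives the parity relation $\Le_n(-x)=(-1)^n\Le_n(x)$; thus the operator $T$ with $T[\Le_n(x)]=(-1)^n\Le_n(x)$ is simply $T[p(x)]=p(-x)$, which plainly preserves reality of zeros. Hence $\seq{r^k}$ is a Legendre multiplier sequence whenever $|r|=1$.

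For necessity, I would first dispose of the range $0<|r|<1$: such a sequence is non-trivial, so by Theorem \ref{inclusion} and the discussion following it its terms are non-decreasing in magnitude, which is impossible since $|r|^k$ is strictly decreasing. Next, since the operators satisfy $T_{-r}=T_{-1}\circ T_r$, the sequence $\seq{r^k}$ is a Legendre multiplier sequence if and only if $\seq{(-r)^k}$ is, so it suffices to rule out $r>1$. Since the differential operator representation of $T_\Le$ is an infinite series when $\seq{\gamma_k}=\seq{r^k}$, Theorem \ref{3exw} seems awkward to apply here; instead I would exhibit, for each $r>1$, a real-rooted polynomial whose image under $T_\Le$ has non-real zeros, detected through Theorems \ref{cd} and \ref{qd}. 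The first case is clean: expanding $(1+x)^3=\frac{2}{5}\Le_3(x)+2\Le_2(x)+\frac{18}{5}\Le_1(x)+2\Le_0(x)$ and applying $T_\Le$ gives a cubic whose discriminant in $x$ equals $\frac{108}{125}\,r^6(r^2-1)^2(r^2-6)$, which is negative for $1<r^2<6$, so by Theorem \ref{cd} the image has non-real zeros and $\seq{r^k}$ fails to be a Legendre multiplier sequence for $1<|r|<\sqrt{6}$. To cover $|r|\ge\sqrt{6}$ I would bring in additional test polynomials — for instance $(1+x)^N$ with larger $N$, or a one-parameter family of real-rooted cubics or quartics whose coefficients are allowed to depend on $r$ — and again read off the non-real zeros via Theorems \ref{cd} and \ref{qd}.

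The main obstacle is precisely this last step, and it forces the witnesses to be chosen $r$-dependently: if $p=\sum_{k=0}^{m}a_k\Le_k(x)$ with $a_m\neq 0$, then $r^{-m}T_\Le[p]$ converges coefficientwise to $a_m\Le_m(x)$ as $r\to\infty$, and $\Le_m$ has $m$ simple real zeros, so $T_\Le[p]$ has only real zeros once $r$ is large enough; no single polynomial can witness failure across the entire range $r>1$. Thus the real work lies in assembling a family of real-rooted polynomials that collectively covers every $r>1$, and in carrying the corresponding discriminant computations — whose algebraic complexity grows with the degree of the test polynomial — through to a definite sign.
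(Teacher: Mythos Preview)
Your treatment of the sufficiency direction and of the range $0<|r|<1$ matches the paper's argument essentially verbatim, and your reduction from $r<-1$ to $r>1$ via $T_{-r}=T_{-1}\circ T_r$ is a clean touch. The gap is exactly where you locate it yourself: for $|r|\ge\sqrt{6}$ you have a plan but no witness. You correctly diagnose that a fixed test polynomial cannot work for all large $r$, and you even name the remedy --- ``a one-parameter family of real-rooted cubics or quartics whose coefficients are allowed to depend on $r$'' --- but you stop short of producing one. As written, then, the proposal is incomplete: the statement is not proved for $|r|\ge\sqrt{6}$.

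The paper closes this gap with a single stroke: it applies $\seq{r^k}$ to $(x+r)^4$. This is an $r$-dependent quartic with only real zeros, and the discriminant of its image factors as $r^{12}$ times a degree-$12$ even polynomial in $r$ whose sign analysis, together with Theorem~\ref{qd}\,(1)(b), yields non-real zeros for every $|r|>1$ at once. No patching of overlapping intervals is needed. The moral is that letting the root of the test polynomial scale with $r$ counteracts precisely the degeneration $r^{-m}T_\Le[p]\to a_m\Le_m$ that you identified; with $p(x)=(x+r)^4$ the rescaled image does not collapse to a single Legendre polynomial as $r\to\infty$, so the obstruction you noted disappears. Replacing your $(1+x)^3$ step and the unfinished large-$r$ discussion by this one computation would complete the argument.
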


\begin{proof} Assume first that $|r|=1$. If $r=1$, then the sequence $\seq{r^k}$ is trivial. Suppose now that  $\ds{p(x)=\sum_{k=0}^{n} a_i \Le_i(x)}$ has only real zeros. Then 
\[
p(-x)=\sum_{k=0}^{n} a_i \Le_i(-x)=\sum_{k=0}^{n} a_i (-1)^i \Le_i(x)
\]
also has only real zeros, which implies that $\seq{-1^k}$ is a Legendre multiplier sequence.\\
For the converse assume that $|r| \neq 1$. If $0<|r|<1$, the sequence $\seq{r^k}$ is not a Hermite multiplier sequence, and hence by the results in Section \ref{prop} it cannot be a Legendre multiplier sequence. \\
If $|r| > 1$, we apply the sequence $\{r^k\}_{k=0}^{\infty}$ to the polynomial $p(x)=(x+r)^4$. We thus obtain the polynomial
\[
\widetilde{p}(x)=\frac{16}{5}+\frac{32}{5}rx+\frac{16}{7}r^2(-1+3x^2)+\frac{4}{5}r^3(-3x+5x^3)+\frac{1}{35}r^4(3-30x^2+35x^4).
\]
The discriminant of $\widetilde{p}(x)$ with respect to $x$ is given by
\begin{eqnarray*}
\Delta=\frac{16384}{10504375}(44044r^{12}-147576r^{14}+180624r^{16}-96991r^{18}+22329r^{20}-2565r^{22}+135r^{24}).
\end{eqnarray*}
By Theorem \ref{qd} part (1)(b) we conclude that $\widetilde{p}(x)$ has complex roots. Hence $\seq{r^k}$ is not a Legendre multiplier sequence when $|r|>1$. 

\end{proof}


\section{Open questions} \label{concl}
In this paper we partially classified Legendre multiplier sequences.  We exhibited general properties of Legendre multiplier sequences and showed that the Legendre multiplier sequences are a strict subset of the Hermite multiplier sequences. We then gave a complete characterization of linear, quadratic, and geometric Legendre multiplier sequences. We conclude the paper with a short list of open questions:
\begin{enumerate}
\item We know that a large class of cubic sequences fail to be Legendre multiplier sequences. There is overwhelming numerical evidence to suggest that there are {\it no} cubic Legendre sequences, a conjecture based on the complete lack of linear Legendre multiplier sequences.
\item As a generalization of the first problem, we propose that there are no Legendre multiplier sequences of {\it any odd degree}. This claim is supported by the differential equation
\[
(\star) \quad (x^2+1) \Le_n''(x)-2x \Le_n'(x)=n(n+1) \Le_n(x),
\]
where the coefficient of the non-differentiated term is of even degree in $n$, although we have no numerical evidence beyond the cubic sequences.
\item Given a sequence of real numbers $\seq{\gamma_k}$, consider the operator $\Gamma$ defined by $\Gamma[\Le_n(x)]=\gamma_n \Le_n(x)$. Both $\Gamma$ and the operator $T=(x^2-1)D^2-2x D$ (as in the left hand side of equation ($\star$)) are diagonal with respect to the Legendre basis and hence $\Gamma T=T \Gamma$. Can one characterize all differential operators which commute with $T$, and give sufficient and/or necessary conditions for such operators to correspond to Legendre multiplier sequences?  (This approach was suggested by David Cardon at Brigham Young University)
\item It is known that the falling factorial sequence 
\[
\left\{\frac{\Gamma(k+1)}{\Gamma(k-n+1)}  \right\}_{k=0}^{\infty} = \{k(k-1)\cdots(k-n+1)\}_{k=0}^{\infty}
\]
is a classical, Hermite and generalized Laguerre multiplier sequence for every $n \in \N$ but it is not a Legendre multiplier sequence. We suspect however that the sequence
\[
\left\{\frac{\Gamma(k(k+1)+1)}{\Gamma(k(k+1)-n+1)}  \right\}_{k=0}^{\infty}
\]
is a Legendre multiplier sequence, although none of the methods we are familiar with yield a proof of this fact.
\end{enumerate}

%
%

\end{document}